\author{Christian J. Berghoff}
\title{Efficient computation of universal elliptic Gau{\ss} sums}
\address{Universit\"at Bonn, Mathematisches Institut, Endenicher Allee 60, 53115 Bonn, Germany}
\email{berghoff@math.uni-bonn.de}
\newcounter{algorithm}
\renewcommand{\thealgorithm}{\arabic{algorithm}}
\def\algorithm{\@ifnextchar[{\@algorithma}{\@algorithmb}}
\def\@algorithma[#1]{%
	\refstepcounter{algorithm}
	\trivlist
	\leftmargin\z@
	\itemindent\z@
	\labelsep\z@
	\item[\parbox{\columnwidth}{%
		\hrule
		\hrule
		\noindent\strut\textbf{Algorithm \thealgorithm.} #1
		\hrule
	}]\hfil\vskip0em%
}
\def\@algorithmb{\@algorithma[]}
\pgfplotsset{compat=1.8}
\numberwithin{equation}{section}
\theoremstyle{plain}
\newtheorem{satz}{Theorem}[section]
\newtheorem{prop}[satz]{Proposition}
\newtheorem{lemma}[satz]{Lemma}
\newtheorem{koro}[satz]{Corollary}
\theoremstyle{definition}
\newtheorem{defi}[satz]{Definition}
\theoremstyle{remark}
\DeclareMathOperator{\gal}{Gal}
\DeclareMathOperator{\deg2}{deg}
\DeclareMathOperator{\ord}{ord}
\DeclareMathOperator{\M}{\mathsf{{M}}}
\DeclareMathOperator{\Cpoly}{\mathsf{C}}
\DeclareMathOperator{\lc}{lc}
\let\prec\relax % Alte Definition (Symbol wie > beim logischen Folgern) wird überschrieben
\DeclareMathOperator{\prec}{prec}
\newcommand{\OO}{\mathcal{O}}
\newcommand{\Q}{\mathbb{Q}}
\newcommand{\F}{\mathbb{F}}
\newcommand{\N}{\mathbb{N}}
\newcommand{\Z}{\mathbb{Z}}
\newcommand{\C}{\mathbb{C}}
\newcommand{\An}{\mathbf{A}}
\begin{document}
\begin{abstract}
In \cite{ELGS} it has been shown that the elliptic Gau{\ss} sums whose use has been proposed in the context of counting points on elliptic curves and primality tests in \cite{Mi_CIDE, CIDE} can be computed by using modular functions. In this work we give detailed algorithms for the necessary computations mentioned in \cite{ELGS}, all of which have been implemented in C. We analyse the relatively straightforward algorithms derived from the theory and provide several improvements speeding up computations considerably. In addition, slightly generalizing \cite{ELGS} we describe how (elliptic) Jacobi sums may be determined in a very similar way and show how this can be used. We conclude by an analysis of space and run-time requirements of the algorithms.
\end{abstract}
\maketitle

\tableofcontents
% !TeX spellcheck = en_GB
\section{Elliptic curves}\label{sec:ell_kurven}
Within this work we will only consider primes $p>3$ and thus assume that the curve in question is given in the Weierstra{\ss} form
\[ 
E: Y^2=X^3+aX+b=f(X),
 \]
where $a, b \in \F_p$. We will always identify $E$ with its set of points $E(\overline{\F_p})$. For the following well-known statements cf. \cite{Silverman, Washington}. We assume that the elliptic curve is neither singular nor supersingular. It is a standard fact that $E$ is an abelian group with respect to point addition. Its neutral element, the point at infinity, will be denoted $\OO$. For a prime $\ell \neq p$, the $\ell$-torsion subgroup $E[\ell]$ has the shape
\[ 
E[\ell]\cong \Z/\ell\Z \times \Z/\ell\Z.
 \]
In the endomorphism ring of $E$ the Frobenius homomorphism
\[ 
\phi_p: (X, Y) \mapsto (\varphi_p(X), \varphi_p(Y))=(X^p, Y^p)
 \]
satisfies the quadratic equation 
\begin{equation}\label{eq:char_gl}
0=\chi(\phi_p)=\phi_p^2-t\phi_p+p,
 \end{equation} 
where $|t|\leq 2\sqrt{p}$ by the Hasse bound. By restriction $\phi_p$ acts as a linear map on $E[\ell]$. The number of points on $E$ over $\F_p$ is given by $\#E(\F_p)=p+1-t$ and is thus immediate from the value of $t$.\\
Schoof's algorithm computes the value of $t$ modulo $\ell$ for sufficiently many small primes $\ell$ by considering $\chi(\phi_p)$ modulo $\ell$ and afterwards combines the results by means of the Chinese Remainder Theorem. In the original version this requires computations in extensions of degree $O(\ell^2)$.\\
However, a lot of work has been put into elaborating improvements. Let $\Delta=t^2-4p$ denote the discriminant of equation \eqref{eq:char_gl}. Then we distinguish the following cases:
\begin{enumerate}
\item If $\left( \frac{\Delta}{\ell}\right)=1$, then $\ell$ is called an \textit{Elkies prime}. In this case, the characteristic equation factors as $\chi(\phi_p)=(\phi_p-\lambda)(\phi_p-\mu) \mod \ell$, so when acting on $E[\ell]$ the map $\phi_p$ has two eigenvalues $\lambda, \mu \in \F_\ell^*$ with corresponding eigenpoints $P, Q$. Since $\lambda\mu=p$ and $\lambda+\mu=t$, it obviously suffices to determine one of them. So we have to solve the discrete logarithm problem
\[ 
\lambda P=\phi_p(P)=(P_x^p, P_y^p),
 \]
which only requires working in extensions of degree $O(\ell)$.
\item If  $\left( \frac{\Delta}{\ell}\right)=-1$, then $\ell$ is called an \textit{Atkin prime}. In this case the eigenvalues of $\phi_p$ are in $\F_{\ell^2}\backslash\F_\ell$ and there is no eigenpoint $P \in E[\ell]$. We do not consider this case.
\end{enumerate}

The approach to Elkies primes was further improved in numerous publications, e.~g. \cite{MaMu, GaMo, BoMoSaSc, Enge, BrLaSu, Sutherland}. We focus on the new ideas introduced in \cite{MiMoSc}. The algorithm it presents allows to work in extensions of degree $n$, where $n$ runs through maximal coprime divisors of $\ell-1$, using so-called \textit{elliptic Gaussian periods}.\\

A variant of this approach was presented in \cite{Mi1} and \cite{MiVu}. It relies instead on so-called \textit{elliptic Gau{\ss} sums}. For a character $\chi: (\Z/\ell\Z)^* \rightarrow \langle\zeta_n\rangle$ of order $n$ with $n \mid \ell-1$ these are defined in analogy to the classical cyclotomic Gau{\ss} sums via
\begin{equation}\label{eq:ell_gs}
G_{\ell,n, \chi}(E)=\sum_{a=1}^{\ell-1}\chi(a)(aP)_u
\end{equation}
for an $\ell$-torsion point $P$ on $E$, where $u=y$ for $n$ even and $u=x$ for $n$ odd. As was shown in \cite{Mi1},
\begin{equation}\label{eq:ell_gs_pot_eigenschaft}
G_{\ell,n, \chi}(E)^n, \frac{G_{\ell,n, \chi}(E)^m}{G_{\ell,n, \chi^m}(E)} \in \F_p[\zeta_n]\quad \text{for}\quad m<n
\end{equation}
holds. In addition, the index in $\F_\ell^*$ of the eigenvalue $\lambda$ corresponding to $P$ can directly be calculated modulo $n$ using the equation
\begin{equation}\label{eq:lambda_aus_ell_gs}
G_{\ell,n, \chi}(E)^p=\chi^{-p}(\lambda)G_{\ell, n, \chi^p}(E)\quad \Rightarrow \quad 
\frac{G_{\ell, n, \chi}(E)^m}{G_{\ell, n, \chi^m}(E)}(G_{\ell, n, \chi}(E)^n)^q=\chi^{-m}(\lambda),
\end{equation}
where $p=nq+m$ holds. When the quantities from equation \eqref{eq:ell_gs_pot_eigenschaft} have been computed, it thus suffices to do calculations in the extension $\F_p[\zeta_n]$ of degree $\varphi(n)$ to derive the index of $\lambda$ in $\F_\ell^*$ modulo $n$ before composing the modular information by means of the Chinese remainder theorem. In the following sections we will be concerned with the efficient computation of the quantities in question using \textit{universal elliptic Gau{\ss} sums}, which were defined in \cite[Corollary 2.25]{ELGS}, instead of using the definition \eqref{eq:ell_gs}, which requires passing through larger extensions.
% !TeX spellcheck = en_GB
\section{Computation of the universal elliptic Gau{\ss} sums}\label{sec:rechnungen}
\subsection{Prerequisites}\label{sec:prerequisites}
We first recall some facts from \cite{ELGS}, to which we refer the reader for details. A modular function of weight $k \in \Z$ for a subgroup $\Gamma^\prime \subseteq \textup{SL}_2(\Z)$ is a meromorphic function $f(\tau)$ on the upper complex half-plane $\mathbb{H}=\{\tau \in \C: \Im(\tau)>0\}$ satisfying
\begin{equation}\label{eq:def_mod_funk}
f(\gamma\tau)=(c\tau+d)^kf(\tau)\ \textup{for all}\ \gamma=\left(\begin{smallmatrix}
a & b \\ c & d
\end{smallmatrix}\right) \in \Gamma^\prime,
\end{equation}
where $\gamma\tau=\frac{a\tau+b}{c\tau+d}$, and some technical conditions. Equation \eqref{eq:def_mod_funk} in particular implies $f$ can be written as a Laurent series in terms of $q_N=\exp\left(\frac{2\pi i\tau}{N}\right)$ for some $N \in \N$ depending on $\Gamma^\prime$. We use the notation $q=q_1$ and consider the groups 
$\Gamma^\prime=\Gamma_0(\ell):=\left\{\left(\begin{smallmatrix}
a & b\\ c& d
\end{smallmatrix}\right) \in \textup{SL}_2(\Z): \ell \mid c \right\}$. The field of modular functions of weight $0$ for a group $\Gamma^\prime$ will be denoted by $\An_0(\Gamma^\prime)$. The Fricke-Atkin-Lehner involution $w_\ell$ acts on modular functions $f(\tau)$ via $f(\tau) \mapsto f\left(\frac{-1}{\ell\tau}\right)=:f^*(\tau)$, where $f^*(\tau)=f(\ell\tau)$ for $f(\tau) \in \An_0(\textup{SL}_2(\Z))$ holds. We recall the Laurent series
\begin{align}
x(w,q)&=\frac{1}{12}+\frac{w}{(1-w)^2}+\sum_{n=1}^{\infty}\sum_{m=1}^{\infty}mq^{nm}(w^m+w^{-m})
-2mq^{nm}, \label{eq:x_def}\\
y(w, q)&=\frac{w+w^2}{2(1-w)^3}+\frac{1}{2}\sum_{n=1}^{\infty}\sum_{m=1}^{\infty}\frac{m(m+1)}{2}
\left(q^{nm}(w^m-w^{-m})+q^{n(m+1)}(w^{m+1}-w^{-(m+1)}) \right), \label{eq:y_def}\\
\eta(q)&=q^{\frac{1}{24}}\left(1+\sum_{n=1}^{\infty}(-1)^n\left(q^{n(3n-1)/2}+q^{n(3n+1)/2}\right)\right),\label{eq:eta_def}\\
m_\ell(q)&=\ell^s \left(\frac{\eta(q^\ell)}{\eta(q)}\right)^{2s}\quad \text{with}\quad s=\frac{12}{\gcd(12, \ell-1)}.\label{eq:ml_def}
\end{align}
We further use $p_1(q)=\sum_{\zeta \in \mu_\ell, \zeta \neq 1} x(\zeta, q)$, the modular discriminant $\Delta(q)=\eta(q)^{24}$ and the well-known $j$-invariant $j(q)$, which is surjective on $\C$. There is a polynomial $M_\ell \in \C[X, Y]$, sometimes referred to as the canonical modular polynomial, such that $M_\ell(X, j(q))$ is irreducible over $\C(j(q))[X]$ and $m_\ell(q)$ is one of its roots. Furthermore, $\deg2 M_\ell=\frac{\ell-1}{\gcd(\ell-1,\, 12)}$ holds.\\
Now for a prime $\ell$ let $n \mid \ell-1$ and $\chi: \F_\ell^* \rightarrow \mu_n$ be a character of order $n$. Defining 
\begin{equation}\label{eq:G_ln_lr}
G_{\ell, n, \chi}(q)=G_{\ell, n}(q):=\sum_{\lambda \in \F_\ell^*} \chi(\lambda)V(\zeta_\ell^{\lambda}, q)\quad 
\text{with}\quad V=\begin{cases}
x,\quad n \equiv 1 \mod 2,\\
y, \quad n \equiv 0 \mod 2,
\end{cases}
 \end{equation}
corollary 2.25 and proposition 2.16 of \cite{ELGS} imply the following

\begin{satz}\label{satz:gs_modulfunktion_2}
Let $\ell$, $n$, $\chi$ be as above. Furthermore, let
\[ 
r=
\begin{cases}
\min\left\{r: \frac{n+r}{6} \in \N\right\},& n \equiv 1 \mod 2,\\ %% es ist 1\leq r\leq 5
3,& n=2, \\
0,& \text{else}
\end{cases}
\text{and}\quad 
e_{\Delta}=\begin{cases}
\frac{n+r}{6}, & n\equiv 1 \mod 2,\\
1, & n=2,\\
\frac{n}{4}, & \text{else.}
\end{cases}
 \]
Define the \textup{universal elliptic Gau{\ss} sum}
\begin{equation}\label{eq:tau_ln_lr}
\tau_{\ell, n}(q):=\frac{G_{\ell, n}(q)^n p_1(q)^r}{\Delta(q)^{e_{\Delta}}}.
 \end{equation}
Then $\tau_{\ell, n}(q)$ has coefficients in $\Q[\zeta_n]$ and is a holomorphic modular function of weight $0$ for $\Gamma_0(\ell)$. There exist $k\geq 0$ and a polynomial $Q(X, Y) \in \C[X, Y]$ with $\deg2_Y(Q)<\deg2_Y(M_\ell)$ such that
\begin{equation}\label{eq:gs_darstellung_ml}
\tau_{\ell, n}(q)\frac{\partial M_\ell}{\partial Y}(m_\ell(q), j(q))=m_\ell(q)^{-k}Q(m_\ell(q), j(q)).
 \end{equation}
\end{satz}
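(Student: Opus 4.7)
The plan is to establish the three assertions in turn: rationality of the $q$-expansion coefficients, modularity and holomorphy for $\Gamma_0(\ell)$, and the representation formula \eqref{eq:gs_darstellung_ml}.

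For the first assertion I would argue by Galois invariance. The $q$-series $x(w,q)$, $y(w,q)$, $\eta(q)$ and $\Delta(q)=\eta(q)^{24}$ have integer coefficients as functions of $w$, and $p_1(q)=\sum_{\zeta\in\mu_\ell\setminus\{1\}}x(\zeta,q)$ is already $\gal(\Q(\zeta_\ell)/\Q)$-invariant because the sum ranges over a full Galois orbit. It therefore suffices to check $G_{\ell,n}(q)^n\in\Q(\zeta_n)[[q^{1/N}]]$ for the appropriate $N$. For $\sigma_s\in\gal(\Q(\zeta_n,\zeta_\ell)/\Q(\zeta_n))$ with $\sigma_s(\zeta_\ell)=\zeta_\ell^s$, the substitution $\lambda\mapsto s^{-1}\lambda$ in \eqref{eq:G_ln_lr} yields $\sigma_s(G_{\ell,n}(q))=\chi(s)^{-1}G_{\ell,n}(q)$, and raising to the $n$-th power then kills the character factor.

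For the second assertion I would verify the transformation law and the holomorphy separately. If $\gamma=\left(\begin{smallmatrix}a&b\\c&d\end{smallmatrix}\right)\in\Gamma_0(\ell)$, then $\ell\mid c$ forces $\gamma$ to be upper-triangular modulo $\ell$. Tracking how it permutes the $\ell$-division points encoded by $w=\zeta_\ell^\lambda$ yields a relation of the shape $G_{\ell,n}(\gamma\tau)=\chi(d)^{-1}(c\tau+d)^{w(V)}G_{\ell,n}(\tau)$, where $w(V)=2$ for $V=x$ ($n$ odd) and $w(V)=3$ for $V=y$ ($n$ even). Raising to the $n$-th power annihilates $\chi(d)^{-n}=1$, and the normalising factors $p_1^r$ (weight $2r$) and $\Delta^{-e_\Delta}$ (weight $-12e_\Delta$) are calibrated precisely so that the total weight $n\cdot w(V)+2r-12e_\Delta$ equals zero --- this is exactly the arithmetic content of the case-by-case definitions of $r$ and $e_\Delta$. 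Holomorphy then has to be verified at the cusp $\infty$ by bounding the $q$-order of $G_{\ell,n}(q)^n$ against those of $\Delta(q)^{e_\Delta}$ and $p_1(q)^r$, and, after applying the Fricke involution $w_\ell$, at the cusp $0$ by performing the analogous analysis on the $q^*$-expansions.

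For the third assertion I would pass to the function field. Since $\An_0(\Gamma_0(\ell))=\C(j(q),m_\ell(q))$ and $M_\ell(X,Y)$ is the minimal polynomial of $m_\ell$ over $\C(j)$, the field $\C(j,m_\ell)$ is likewise a finite extension of $\C(m_\ell)$ in which $j$ is primitive of $Y$-degree $\deg_Y M_\ell$. The standard dual-basis identity for a separable extension then provides some $\widetilde Q\in\C(X)[Y]$ with $\deg_Y\widetilde Q<\deg_Y M_\ell$ such that $\tau_{\ell,n}(q)\,\frac{\partial M_\ell}{\partial Y}(m_\ell(q),j(q))=\widetilde Q(m_\ell(q),j(q))$. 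The only source of denominators in $X=m_\ell$ is the vanishing or blow-up of $m_\ell$ at one of the cusps, so clearing them amounts to multiplying by a single power $m_\ell^k$ and yields the desired polynomial $Q(X,Y)\in\C[X,Y]$. The main obstacle will be the cusp analysis in the second step: one must show that the case-by-case choices of $r$ and $e_\Delta$ exactly cancel the poles of $G_{\ell,n}^n$ at both cusps $\infty$ and $0$ of $\Gamma_0(\ell)$, so that $\tau_{\ell,n}$ is genuinely holomorphic on the compactified modular curve. This forces one to compute leading terms of the $q$- and $q^*$-expansions of $G_{\ell,n}$, $p_1$ and $\Delta$, and is the very reason why the three cases ($n$ odd, $n=2$, and $n$ even with $n>2$) must already be distinguished in the definitions of the constants $r$ and $e_\Delta$.
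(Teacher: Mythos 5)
Your overall architecture is the right one and matches the results of \cite{ELGS} that the paper invokes (it gives no proof of its own, citing Corollary 2.25 and Proposition 2.16 there): the Galois argument $\sigma_s(G_{\ell,n,\chi})=\chi^{-1}(s)G_{\ell,n,\chi}$ for the coefficients is exactly what is quoted in Lemma \ref{lem:produkt_in_kleinerem_koerper}, the transformation law with multiplier $\chi^{-1}(d)(c\tau+d)^e$, $e=2$ or $3$, is quoted in Section \ref{sec:js_theorie}, your weight bookkeeping $n\cdot e+2r-12e_\Delta=0$ is correct in all three cases, and the trace/dual-basis argument over $\C(m_\ell)$ with denominators cleared by a power of $m_\ell$ is the content of the cited Corollary 2.15/Proposition 2.16, used again in the proof of Proposition \ref{prop:darst_a_l}.

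There is, however, one genuine error, and you single it out as your ``main obstacle'': the claim that one must (and can) show $\tau_{\ell,n}$ is holomorphic at both cusps, i.e.\ on the compactified modular curve. This is false and, were it true, would make the theorem vacuous: a weight-$0$ function holomorphic on all of the compact curve $X_0(\ell)$ is constant. In fact $G_{\ell,n}(q)$, $p_1(q)$ have $q$-order $0$ while $\ord(\Delta^{e_\Delta})=e_\Delta$, so $\tau_{\ell,n}$ has a pole of order (about) $e_\Delta$ at the cusp $\infty$; the paper's own estimates \eqref{eq:ord_gs_wl_zerlegt} are negative for exactly this reason, and the factor $m_\ell(q)^{-k}$ together with the polynomial dependence on $j$ in \eqref{eq:gs_darstellung_ml} exists precisely to absorb these cusp poles --- if your cusp-holomorphy held, no $m_\ell^{-k}$ would be needed. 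What ``holomorphic'' means in the statement is holomorphy on $\mathbb{H}$, and this requires no calibration at all: $x(\zeta_\ell^\lambda,q)$, $y(\zeta_\ell^\lambda,q)$ and $p_1(q)$ are holomorphic on $\mathbb{H}$ (the poles of $x(w,\cdot)$, $y(w,\cdot)$ occur only for $w\in q^{\Z}$, which never meets $\mu_\ell\setminus\{1\}$ when $|q|<1$) and $\Delta$ is non-vanishing there. The constants $r$ and $e_\Delta$ are chosen solely to make the total weight zero with $e_\Delta$ a non-negative integer, not to cancel cusp poles. Correct this point --- drop the cusp analysis from the holomorphy step and instead use it, as you implicitly do in your third step, only to control the denominators in $\C(m_\ell)$ (holomorphy of the trace coefficients on $\mathbb{H}$ plus $m_\ell(\mathbb{H})=\C^*$ gives Laurent polynomials in $m_\ell$, hence the single power $m_\ell^{-k}$) --- and the proof goes through.
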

If we can efficiently compute the polynomial $Q$ in \eqref{eq:gs_darstellung_ml}, we obtain a rational expression for $\tau_{\ell, n}(q)$ in terms of $j(q)$ and $m_\ell(q)$. As detailed in \cite[Section 3]{ELGS}, for an elliptic curve $E/\F_p$ this formula readily translates to a formula for the elliptic Gau{\ss} sum $G_{\ell, n}(E)^n$ from \eqref{eq:ell_gs} in terms of the values $j(E)$, $m_\ell(E)$ which may be efficiently computed.

\subsection{Laurent series}\label{sec:implementierung}

%%% Allgemeine relativ offensichtliche Bemerkungen, die man ausnutzen kann
\subsubsection{General remarks}
When dealing with Laurent series $g(q)=\sum_{i=i_0}^\infty g_iq^i$ in this section we will write $\ord(g)=i_0$ for the order and $\lc(g)=g_{i_0}$ for the leading coefficient of the series. We first remark that when computing the Laurent series of the universal elliptic Gau{\ss} sums $\tau_{\ell, n}(q)$ the part in formula \eqref{eq:x_def} which is independent from $w$ vanishes due to the well-known properties of character sums.\\

In addition, in our implementation using the GMP library \cite{GMP} we wished to use as long as possible the data type for integers for efficiency reasons, and only to convert our results into rational numbers in the last step. In order to realise this plan, we analyse the denominators of the Laurent series of the universal elliptic Gau{\ss} sums.\\
We first remark that in formula \eqref{eq:x_def} used to compute the $x$-coordinate all coefficients except for the constant one, which is independent from $q$, lie in $\Z[\zeta_\ell]$ and that they lie in $\frac{1}{2}\Z[\zeta_\ell]$ for formula \eqref{eq:y_def} corresponding to the $y$-coordinate. The constant term yields a power possibly dividing the denominator of the expression. The exact value of $v_\ell$ is computed in
\begin{lemma}\label{lem:l_exp}
Let $v_\ell$ be the exponent of $\ell$ in the denominator of the Gau{\ss} sum $\tau_{\ell, n}(q)$. Then we obtain
\[ 
v_\ell\leq
\begin{cases}
\lceil \frac{2n}{\ell-1} \rceil, &n \text{ odd,}\\
\lceil \frac{3n}{\ell-1} \rceil, &n \text{ even.}
\end{cases}
 \]

\end{lemma}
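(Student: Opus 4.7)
The plan is to bound $v_\pi(\tau_{\ell,n}(q))$ at the prime $\pi = 1 - \zeta_\ell$ of $\Z[\zeta_\ell]$ (for which $v_\pi(\ell) = \ell - 1$) term by term, and then to exploit that $\ell$ is unramified in $\Q(\zeta_n)$ (since $\gcd(n,\ell) = 1$), so that $v_\ell$ is integer-valued on the coefficients of $\tau_{\ell, n}(q) \in \Q(\zeta_n)[[q]]$ (Theorem \ref{satz:gs_modulfunktion_2}). This integrality is what turns a fractional $\pi$-adic estimate into the claimed ceiling.

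First I would estimate the building blocks $V(\zeta_\ell^\lambda, q)$. In \eqref{eq:x_def} the only source of a $\pi$-pole is $w/(1-w)^2$, contributing $v_\pi = -2$ at $w = \zeta_\ell^\lambda$; the constant $\frac{1}{12}$ is $\pi$-integral for $\ell > 3$, and the remaining coefficients lie in $\Z[\zeta_\ell]$. Analogously \eqref{eq:y_def} gives $v_\pi(y(\zeta_\ell^\lambda, q)) \geq -3$, the pole coming from $(w+w^2)/(2(1-w)^3)$, where $2$ and $1+\zeta_\ell^\lambda$ are units at $\pi$ for $\ell > 2$. Hence $v_\pi(G_{\ell, n}(q)^n) \geq -2n$ for $n$ odd and $\geq -3n$ for $n$ even.

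Next I would show that $p_1(q)^r\,\Delta(q)^{-e_\Delta}$ is $\pi$-integral. The factor $\Delta(q)^{-e_\Delta}$ is immediate because $\Delta(q) = q\prod_{k \geq 1}(1 - q^k)^{24}$ lies in $q\Z[[q]]$ with leading coefficient $1$. For $p_1(q)$ the decisive observation is that each of its $q$-coefficients is invariant under the Galois action $\zeta_\ell \mapsto \zeta_\ell^k$ with $k \in (\Z/\ell\Z)^*$, since this action merely permutes the summation over the nontrivial $\ell$th roots of unity; hence the coefficients of $p_1(q)$ lie in $\Q$. The term-wise bound $v_\pi(p_1(q)) \geq -2$ then forces $v_\ell(p_1(q)) \geq -2/(\ell-1) > -1$ for $\ell > 3$, and integrality of $v_\ell$ on $\Q$ upgrades this to $v_\ell(p_1(q)) \geq 0$.

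I expect the Galois-rationality argument for $p_1$ to be the main obstacle: without it, the naive bound $v_\pi(p_1(q)^r) \geq -2r$ would inflate the final estimate by an extra summand of order $r/(\ell-1)$. Assembling the three bounds yields $v_\pi(\tau_{\ell,n}(q)) \geq -2n$ for $n$ odd and $\geq -3n$ for $n$ even (the case $n = 2$ is covered because $p_1^3$ is still $\pi$-integral while $e_\Delta = 1$). Since $v_\ell$ is integer-valued on the coefficients in $\Q(\zeta_n)$ and bounded below by $v_\pi/(\ell-1)$, the denominator exponents are bounded by $\lceil 2n/(\ell-1)\rceil$ and $\lceil 3n/(\ell-1)\rceil$ respectively.
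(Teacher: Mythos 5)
Your proof is correct and takes essentially the same route as the paper: the pole of order $2$ (resp.\ $3$) at $1-\zeta_\ell$ in the constant terms of \eqref{eq:x_def} and \eqref{eq:y_def} gives a $(1-\zeta_\ell)$-denominator exponent of at most $2n$ (resp.\ $3n$), which the factorization $(\ell)=(1-\zeta_\ell)^{\ell-1}$ together with Theorem \ref{satz:gs_modulfunktion_2} (coefficients in $\Q[\zeta_n]$, where $\ell$ is unramified) converts into the stated ceiling. Your explicit verification that $p_1(q)^r\Delta(q)^{-e_\Delta}$ contributes no $\ell$-denominator is a detail the paper only records in the remark following the lemma, but it does not change the argument.
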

\begin{proof}
As is well-known,
\[ 
(\ell)=(1-\zeta_\ell)^{\ell-1}
 \]
holds when both sides are considered as ideals in $\Z[\zeta_\ell]=\OO(\Q[\zeta_\ell])$. This implies
\[ 
\ell(1-\zeta_\ell)^{-k}=e(1-\zeta_\ell)^{\ell-1}(1-\zeta_\ell)^{-k}=e(1-\zeta_\ell)^{\ell-1-k}
 \]
for $e \in \Z[\zeta_\ell]^*$. Since $(1-\zeta_\ell) \notin \Z[\zeta_\ell]^*$, this yields
\[ 
\ell(1-\zeta_\ell)^{-k} \in \Z[\zeta_\ell] \Leftrightarrow k \leq \ell-1.
 \]
More generally, one obtains
\[ 
\ell^{v_\ell}(1-\zeta_\ell)^{-k}=e(1-\zeta_\ell)^{v_\ell(\ell-1)-k} \in \Z[\zeta_\ell] \Leftrightarrow k \leq v_\ell(\ell-1) \Leftrightarrow v_\ell \geq \frac{k}{\ell-1}.
 \]
Hence, $v_\ell$ can be taken to be $\lceil \frac{k}{\ell-1} \rceil$. The shape of the constant terms in formulae \eqref{eq:x_def} and \eqref{eq:y_def} as well as theorem \ref{satz:gs_modulfunktion_2} yield the assertion.
\end{proof}

We remark that the coefficients of $12p_1(q)$ are likewise integers. Since $\Delta(q)$ has leading coefficient $1$, the coefficients of $\Delta(q)^{-1}$ are also integers. From these considerations and the definition of $\tau_{\ell,n}(q)$ in theorem \ref{satz:gs_modulfunktion_2} we deduce
\begin{koro}\label{koro:gs_nenner}
Let $\ell$ be a prime and let $n \mid \ell-1$. Let $r=\min\{r: \frac{n+r}{6} \in \N\}$ for $n \equiv 1 \mod 2$ be defined as in theorem \ref{satz:gs_modulfunktion_2} and $v_{\ell}$ as in lemma \ref{lem:l_exp}. Define
\[ 
c=\begin{cases}
12^r\ell^{v_{\ell}},\quad & n \equiv 1 \mod 2,\\
2^2\cdot12^3\ell^{v_{\ell}}, \quad & n=2,\\
2^n\ell^{v_{\ell}},\quad & \text{else}.
\end{cases}
 \]
Then the coefficients of $c\cdot\tau_{\ell,n}(q)$ lie in $\Z[\zeta_n]$.
\end{koro}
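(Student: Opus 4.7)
Since Theorem~\ref{satz:gs_modulfunktion_2} already guarantees that the coefficients of $\tau_{\ell,n}(q)$ lie in $\Q[\zeta_n]$, the task reduces to bounding the denominators that can occur. I would isolate three independent sources of denominators in the defining formula \eqref{eq:tau_ln_lr} for $\tau_{\ell,n}(q)$ and check that the constant $c$ prescribed in each case absorbs each of them.

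The first source is the $\ell$-part, which Lemma~\ref{lem:l_exp} has already handled: the constant-in-$q$ pieces of $x(\zeta_\ell^\lambda, q)$ and $y(\zeta_\ell^\lambda, q)$ contain $(1-\zeta_\ell^\lambda)^{-k}$ with $k\in\{2,3\}$, and these contribute at most the factor $\ell^{v_\ell}$. The second source is the prefactor $\tfrac{1}{2}$ in the Laurent series \eqref{eq:y_def} for $y(w, q)$: when $n$ is even and hence $V=y$, the series $G_{\ell,n}(q)$ has coefficients in $\tfrac{1}{2}\Z[\zeta_\ell,\zeta_n]$, so $G_{\ell,n}(q)^n$ introduces at most a factor $2^n$, while for odd $n$ no such factor is needed. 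The third source is the $q$-independent summand $\tfrac{1}{12}$ in the Laurent series \eqref{eq:x_def} for $x(w, q)$. Being $w$-independent, it is annihilated inside the character sum $G_{\ell,n}(q)=\sum_\lambda \chi(\lambda)V(\zeta_\ell^\lambda,q)$ because $\chi$ is nontrivial, yet it survives in $p_1(q)=\sum_{\zeta\neq 1} x(\zeta,q)$; clearing it therefore costs one factor of $12$ per copy of $p_1$, i.e.\ $12^r$ altogether. Finally, $\Delta(q)^{-e_\Delta}$ contributes nothing, since $\Delta(q)$ has leading coefficient $1$ and positive $q$-order, so its inverse is a Laurent series with integer coefficients.

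Collecting these three contributions yields a multiple of $\tau_{\ell,n}(q)$ whose coefficients lie in $\Z[\zeta_\ell,\zeta_n]$, and Theorem~\ref{satz:gs_modulfunktion_2} then forces them into $\Z[\zeta_n]$. Specialising to the three cases for $r$ prescribed in Theorem~\ref{satz:gs_modulfunktion_2}---namely $r$ odd with $V=x$ for odd $n$, $r=3$ with $V=y$ for $n=2$, and $r=0$ with $V=y$ for even $n>2$---produces exactly the values of $c$ claimed in the corollary. No single step is difficult; the main thing to keep straight, and the reason the $n=2$ case carries both a $2^2$ and a $12^3$ factor, is that the $\tfrac{1}{12}$ in $x(w,q)$ vanishes inside $G_{\ell,n}$ but persists inside $p_1$.
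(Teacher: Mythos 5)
Your proposal is correct and follows essentially the same route as the paper: it clears the $\ell$-power denominators coming from the $(1-\zeta_\ell)$-poles of the constant-in-$q$ terms via Lemma \ref{lem:l_exp}, the factor $2$ per copy of $y$ when $n$ is even, and the factor $12$ per copy of $p_1$ (the $\tfrac{1}{12}$ being killed inside $G_{\ell,n}$ by the nontrivial character but not in $p_1$), while noting that $\Delta(q)^{-1}$ is integral and that integrality in $\Z[\zeta_\ell,\zeta_n]$ combined with Theorem \ref{satz:gs_modulfunktion_2} forces the coefficients into $\Z[\zeta_n]$. This is exactly the argument sketched in the remarks preceding the corollary in the paper.
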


%% Idee: Multipliziere mit passender zyklotomischer Gauß-Summe, um in kleinerem Körper zu rechnen!
\subsubsection{An improved algorithm}
As is evident from \eqref{eq:G_ln_lr}, $G_{\ell,n}(q) \in \Q[\zeta_n, \zeta_\ell]((q))$ holds. In order to compute $\tau_{\ell, n}(q)$ up to precision $\prec(\ell,n)$ we need to determine, in particular, the $n$-th power of an element of this field, which requires
\[ 
O(\log n \M(\ell n \prec(\ell, n)))
 \]
multiplications in $\Z$, if we use the multiple of $\tau_{\ell, n}(q)$ from corollary \ref{koro:gs_nenner}. Equation \eqref{eq:prec} implies we can choose $\prec(\ell, n)=\ell(e_{\Delta}+v+1)$. In the worst case $n, v \in O(\ell)$ holds, which yields a rapidly growing run-time of $O(\log n \M(\ell^4))$ for this step. We now wish to show how this run-time can be significantly reduced.\\

In particular, we make use of the following 
\begin{lemma}\label{lem:produkt_in_kleinerem_koerper}
Let $G_{\chi}(\zeta_\ell)=\sum_{\lambda \in \F_\ell^*} \chi(\lambda)\zeta_\ell^{\lambda}$ denote the ordinary cyclotomic Gau{\ss} sum. Then we obtain
\[ 
G_{\ell, n, \chi}(q)G_{\chi^{-1}}(\zeta_\ell) \in \Q[\zeta_n]((q)).
 \]
\end{lemma}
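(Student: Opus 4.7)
The plan is to exploit the Galois action of $\Gal(\Q[\zeta_n, \zeta_\ell]/\Q[\zeta_n])$. Since $n \mid \ell-1$, we have $\gcd(n, \ell) = 1$, so $\Q[\zeta_n]$ and $\Q[\zeta_\ell]$ are linearly disjoint over $\Q$, and this Galois group is isomorphic to $\F_\ell^*$: for each $a \in \F_\ell^*$ there is an automorphism $\sigma_a$ fixing $\zeta_n$ and sending $\zeta_\ell \mapsto \zeta_\ell^a$. Because the Galois action is applied coefficient-wise to a Laurent series, it suffices to show that every coefficient of the product $G_{\ell, n, \chi}(q) G_{\chi^{-1}}(\zeta_\ell)$ is fixed by every $\sigma_a$; then the coefficients lie in $\Q[\zeta_n]$ as required.

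First I would compute the action of $\sigma_a$ on $G_{\ell, n, \chi}(q)$. By inspection of the series \eqref{eq:x_def} and \eqref{eq:y_def}, the coefficients of $V(w, q) - V(w, q)|_{w=1\text{ terms}}$ as a power series in $q$ are polynomials in $w, w^{-1}$ with rational coefficients, so $\sigma_a(V(\zeta_\ell^\lambda, q)) = V(\zeta_\ell^{a\lambda}, q)$. Substituting $\mu = a\lambda$ and using multiplicativity of $\chi$,
\[
\sigma_a(G_{\ell, n, \chi}(q)) = \sum_{\lambda \in \F_\ell^*} \chi(\lambda) V(\zeta_\ell^{a\lambda}, q) = \sum_{\mu \in \F_\ell^*} \chi(a^{-1}\mu) V(\zeta_\ell^{\mu}, q) = \chi^{-1}(a) G_{\ell, n, \chi}(q).
\]
An analogous computation for the cyclotomic Gauss sum yields
\[
\sigma_a(G_{\chi^{-1}}(\zeta_\ell)) = \sum_{\mu \in \F_\ell^*} \chi^{-1}(\mu) \zeta_\ell^{a\mu} = \chi(a) G_{\chi^{-1}}(\zeta_\ell).
\]

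Multiplying the two identities gives
\[
\sigma_a\bigl(G_{\ell, n, \chi}(q) G_{\chi^{-1}}(\zeta_\ell)\bigr) = \chi^{-1}(a) \chi(a) \, G_{\ell, n, \chi}(q) G_{\chi^{-1}}(\zeta_\ell) = G_{\ell, n, \chi}(q) G_{\chi^{-1}}(\zeta_\ell),
\]
so the product is invariant under the full Galois group $\Gal(\Q[\zeta_n, \zeta_\ell]/\Q[\zeta_n])$ and therefore its coefficients lie in $\Q[\zeta_n]$.

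There is no real obstacle here; the only subtle points are to verify that the power-series coefficients of $V(\zeta_\ell^\lambda, q)$ really lie in $\Q[\zeta_\ell]$ (which was already observed in the discussion of the denominators above) and to keep track of the direction of the character twist. The argument pattern is the classical one used to show that products of Gauss sums with ``complementary'' characters descend to the subfield on which the characters are defined.
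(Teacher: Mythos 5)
Your proposal is correct and follows essentially the same route as the paper: both consider the action of $\gal(\Q[\zeta_\ell,\zeta_n]/\Q[\zeta_n])$, show that $G_{\ell,n,\chi}(q)$ and $G_{\chi^{-1}}(\zeta_\ell)$ pick up the mutually inverse factors $\chi^{-1}(a)$ and $\chi(a)$, and conclude invariance of the product, hence coefficients in $\Q[\zeta_n]$. The only difference is cosmetic: the paper cites the proof of Corollary 2.25 in \cite{ELGS} for the transformation of $G_{\ell,n,\chi}(q)$ and works with a single generator $\sigma:\zeta_\ell\mapsto\zeta_\ell^c$, whereas you verify the coefficient-wise action on $V(\zeta_\ell^\lambda,q)$ directly for all $\sigma_a$.
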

\begin{proof}
By definition the expression in question lies in $\Q[\zeta_\ell, \zeta_n]((q))$. We consider the action of the galois group $G$ of the field extension $\Q[\zeta_\ell, \zeta_n]/\Q[\zeta_n]$ on this expression. Since $(\ell, n)=1$ holds, we obviously obtain $G=\langle \sigma: \zeta_\ell \mapsto \zeta_\ell^c \rangle$, where $c$ is a generator of $\F_\ell^*$. As shown in the proof of corollary 2.25 in \cite{ELGS},
\[ 
\sigma(G_{\ell, n, \chi}(q))=\chi^{-1}(c)G_{\ell, n, \chi}(q)
 \]
holds. In the same vein one can show 
\[ 
\sigma(G_{\chi}(\zeta_\ell))=\chi^{-1}(c)G_{\chi}(\zeta_\ell),
 \]
which immediately implies the invariance of the expression in question under $\sigma$.
\end{proof}

Using this lemma we modify the algorithm for determining $\tau_{\ell, n}(q)$, which results from formulae \eqref{eq:x_def}, \eqref{eq:y_def}, \eqref{eq:G_ln_lr} and \eqref{eq:tau_ln_lr}, in the following way. Instead of directly computing the $n$-th power of $G_{\ell,n,\chi}(q)$, we calculate:

\begin{algorithm}[Fast computation of $G_{\ell, n, \chi}(q)^n$]
\label{algo:gs_lr_besser}
\begin{algorithmic}
    \REQUIRE{$\ell, n, \prec(\ell, n)$}
    \ENSURE{$G_{\ell, n, \chi}(q)^n$}

	\STATE Compute $G_{\ell, n, \chi}(q)$ up to precision $\prec(\ell, n)$ using formulae \eqref{eq:x_def}, \eqref{eq:y_def}, \eqref{eq:G_ln_lr}.\\
    \STATE $T_1:=G_{\ell, n, \chi}(q)G_{\chi^{-1}}(\zeta_\ell)$\\
	\STATE $T_2:=T_1^n$\\
	\STATE $T_3:=G_{\chi^{-1}}(\zeta_\ell)^n$\\
	\RETURN $T_4=T_2T_3^{-1}$
\end{algorithmic}
\end{algorithm}

It is obvious that $T_4=G_{\ell, n, \chi}(q)^n$ holds and we thus compute the result we wish for. According to lemma \ref{lem:produkt_in_kleinerem_koerper} the run-time for step 2 is merely
\[ 
O(\log n \M(n \prec(\ell,n)))=O(\log n \M(\ell^3)),
 \]
since a whole power of $\ell$ is saved in the degree of the polynomials to be multiplied. It is likewise easy to see that $T_3$ can be computed in run-time $O(\log n \M(\ell n))$ and $T_4$ in run-time $O(\prec(\ell, n)\M(n))$, which is negligible.\\

The determination of $T_1$, however, requires $O(\prec(\ell,n)\M(\ell n))$ operations when using a naive implementation. We show how to reduce this cost considerably by computing

\begin{align*}
G_{\ell, n, \chi}(q)G_{\chi^{-1}}(\zeta_\ell)=\sum_{\substack{\lambda_1, \lambda_2 \in \F_\ell^*}} \chi(\lambda_1)V(\zeta_\ell^{\lambda_1}, q)\chi^{-1}(\lambda_2)\zeta_\ell^{\lambda_2}
=\sum_{c=\lambda_1\lambda_2^{-1} \in \F_\ell^*} \chi(c) \sum_{\lambda_1 \in \F_\ell^*} \zeta_\ell^{\lambda_1c^{-1}}V(\zeta_\ell^{\lambda_1}, q).
\end{align*}

Before further transforming the inner sum, we remark
\[ 
V(\zeta_\ell, q)=\sum_{i=0}^{\infty}q^i\sum_{k=0}^{\ell-1} a_{i, k}\zeta_\ell^k \Rightarrow V(\zeta_\ell^{\lambda_1}, q)=\sum_{i=0}^{\infty}q^i\sum_{k=0}^{\ell-1} a_{i, k}\zeta_\ell^{\lambda_1 k},
 \]
which follows by applying an appropriate power of the automorphism  $\sigma: \zeta_\ell \mapsto \zeta_\ell^c$. Hence, one obtains
\begin{align*}
&\sum_{\lambda_1 \in \F_\ell^*}\zeta_\ell^{\lambda_1c^{-1}}V(\zeta_\ell^{\lambda_1}, q)
=\sum_{i=0}^{\infty}q^i\sum_{\lambda_1 \in \F_\ell^*}\zeta_\ell^{\lambda_1c^{-1}} \sum_{k=0}^{\ell-1} a_{i, k}\zeta_\ell^{\lambda_1 k}
=\sum_{i=0}^{\infty}q^i\sum_{\lambda_1 \in \F_\ell^*}\sum_{k=0}^{\ell-1} a_{i, k}\zeta_\ell^{\lambda_1 (k+c^{-1})}\\
=&\sum_{i=0}^{\infty}q^i\left(\sum_{\lambda_1 \in \F_\ell^*}a_{i, -c^{-1}} + 
\sum_{\substack{k=0\\k \neq -c^{-1}}}^{\ell-1} a_{i, k} \sum_{\lambda_3=\lambda_1(k+c^{-1}) \in \F_\ell^*} \zeta_\ell^{\lambda_3}\right)
=\sum_{i=0}^{\infty}q^i\left((\ell-1)a_{i, -c^{-1}} + 
\sum_{\substack{k=0\\k \neq -c^{-1}}}^{\ell-1} a_{i, k} \sum_{\lambda_3=1}^{\ell-1}\zeta_\ell^{\lambda_3} \right)\\
=&\sum_{i=0}^{\infty}q^i\left((\ell-1)a_{i, -c^{-1}} -\sum_{\substack{k=0\\k \neq -c^{-1}}}^{\ell-1} a_{i, k}\right)
=:\sum_{i=0}^{\infty}b_i(c)q^i,
\end{align*}
where the last equality holds because of $\sum_{i=0}^{\ell-1}\zeta_\ell^i=0$. In total, this yields
\[ 
G_{\ell, n, \chi}(q)G_{\chi^{-1}}(\zeta_\ell)=\sum_{i=0}^{\infty}q^i\sum_{c\, \in\, \F_\ell^*}\chi(c)b_i(c)
 \]
with $b_i(c) \in \Q$. Furthermore, for $c_1, c_2 \in \F_\ell^*$
\begin{align}\label{eq:b_i_update}
b_i(c_1)=b_i(c_2)+\ell(a_{i, -c_1^{-1}}-a_{i, -c_2^{-1}})
 \end{align}
obviously holds, whence the computation of $b_i(c), c \in \F_\ell^*$, requires a run-time of $O(\ell)$ for fixed $i$. In order to compute $T_1$ up to the required precision, we thus proceed as follows:

\begin{algorithm}[Speed-up of step 1 in algorithm \ref{algo:gs_lr_besser}]
\label{algo:gs_lr_schritt1_schnell}
\begin{algorithmic}[1]
    \REQUIRE{$\ell, n, \prec(\ell, n)$}
    \ENSURE{$G_{\ell, n, \chi}(q)G_{\chi^{-1}}(\zeta_\ell)$}

	\STATE Determine the coefficients $a_{i,k}$ of $V(\zeta_\ell, q)$ using formulae \eqref{eq:x_def} and \eqref{eq:y_def}, respectively, for $i=0, \ldots, \prec(\ell, n), k=0, \ldots, \ell-1$.\\
	\STATE For $i=0, \ldots, \prec(\ell,n)$ determine the values $b_i(c), c \in \F_\ell^*$, using \eqref{eq:b_i_update}.\\
	\RETURN $\sum_{i=0}^{\prec(\ell, n)}q^i\sum_{c \in \F_\ell^*}\chi(c)b_i(c)$.
\end{algorithmic}
\end{algorithm}

Using this algorithm the value $T_1$ can be determined using $O(\ell \prec(\ell, n))$ operations, which, for large $n$, is negligible as compared to the amount required by the second step of the new algorithm.\\

To avoid the less efficient GMP data type for rational numbers when computing $T_4$ in algorithm \ref{algo:gs_lr_besser} we again examine by which factor we have to multiply intermediate results to ensure all coefficients are integers. The required statement is furnished by
\begin{lemma}
Let $\ell$ be a prime and $\chi$ be a character of order $n \mid \ell-1$. Then one obtains
\[ 
\ell^n G_{\chi}(\zeta_\ell)^{-n} \in \Z[\zeta_n].
 \]
\end{lemma}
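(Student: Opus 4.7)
My plan is first to eliminate the inverse by invoking the classical cyclotomic Gau{\ss} sum identity
\[
G_\chi(\zeta_\ell)\,G_{\chi^{-1}}(\zeta_\ell) = \chi(-1)\,\ell,
\]
which is a standard one-line computation: expand the product as $\sum_{a,b \in \F_\ell^*}\chi(ab^{-1})\zeta_\ell^{a+b}$, substitute $c = ab^{-1}$, and use $\sum_{b \in \F_\ell^*}\zeta_\ell^{b(c+1)} = -1$ for $c \neq -1$ together with $\sum_c \chi(c) = 0$. Raising to the $n$-th power and using $\chi(-1)^n = 1$, which is automatic since $\chi$ has exact order $n$, this recasts the claim as
\[
\ell^n\, G_\chi(\zeta_\ell)^{-n} = G_{\chi^{-1}}(\zeta_\ell)^n,
\]
so it suffices to prove $G_{\chi^{-1}}(\zeta_\ell)^n \in \Z[\zeta_n]$; the factor $\ell^n$ is thereby stripped out of the problem entirely.

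For the remaining step I would reuse the galois-invariance argument from lemma \ref{lem:produkt_in_kleinerem_koerper} almost verbatim. Since $\gcd(\ell, n) = 1$, the galois group $\gal(\Q[\zeta_\ell, \zeta_n]/\Q[\zeta_n]) = \langle \sigma \rangle$ is generated by $\sigma : \zeta_\ell \mapsto \zeta_\ell^c$ for $c$ a generator of $\F_\ell^*$, and the same computation as in the cited lemma shows $\sigma(G_{\chi^{-1}}(\zeta_\ell)) = \chi(c)\,G_{\chi^{-1}}(\zeta_\ell)$. Hence $\sigma$ fixes the $n$-th power, so $G_{\chi^{-1}}(\zeta_\ell)^n \in \Q[\zeta_n]$. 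Since this element is visibly an algebraic integer (being an $n$-th power of an element of $\Z[\zeta_\ell, \zeta_n]$) and $\Z[\zeta_n]$ is the full ring of integers of $\Q[\zeta_n]$, the claim follows.

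I do not anticipate a real obstacle here: both ingredients (the Gau{\ss} sum product formula and the galois-invariance trick) are textbook, and the whole argument essentially recycles the machinery already developed for lemma \ref{lem:produkt_in_kleinerem_koerper}, with the only extra input being the evaluation of $G_\chi \cdot G_{\chi^{-1}}$. The one tiny point deserving explicit mention is the cancellation $\chi(-1)^n = 1$, which is immediate from $\ord(\chi) = n$.
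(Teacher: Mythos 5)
Your argument is correct and is essentially the paper's own proof: the paper likewise combines the product formula $G_\chi(\zeta_\ell)G_{\chi^{-1}}(\zeta_\ell)=\chi(-1)\ell$ (cited from Shimura) with the galois-invariance computation from lemma \ref{lem:produkt_in_kleinerem_koerper} to conclude $\ell^n G_\chi(\zeta_\ell)^{-n}=G_{\chi^{-1}}(\zeta_\ell)^n\in\Z[\zeta_n]$, merely presenting the two ingredients in the opposite order (integrality of the $n$-th power first, then the product identity). Your slightly more explicit justification of integrality via $\Z[\zeta_n]$ being the full ring of integers of $\Q[\zeta_n]$ is a harmless refinement of the paper's ``evident from the definition'' remark.
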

\begin{proof}
The identity
\[ 
\sigma(G_{\chi}(\zeta_\ell))=\chi^{-1}(c)G_{\chi}(\zeta_\ell)
 \]
shown in lemma \ref{lem:produkt_in_kleinerem_koerper}, where $\sigma: \zeta_\ell \mapsto \zeta_\ell^c$ and $\langle c \rangle = \F_\ell^*$, implies
\[ 
G_{\chi}(\zeta_\ell)^n \in \Q[\zeta_n].
 \]
Using the definition of the Gau{\ss} sums it is evident that $G_{\chi}(\zeta_\ell)^n$ actually lies in $\Z[\zeta_n]$. Furthermore, a general property exhibited by Gau{\ss} sums (cf. \cite[p.~91]{Shimura}) is
\[ 
G_{\chi}(\zeta_\ell)G_{\chi^{-1}}(\zeta_\ell)=\chi(-1)\ell.
 \]
Raising this equation to the $n$-th power and using $\chi(-1)^n=1$, we obtain
\[ 
G_{\chi}(\zeta_\ell)^n G_{\chi^{-1}}(\zeta_\ell)^n=\ell^n
 \]
and hence
\[ 
\ell^nG_{\chi}(\zeta_\ell)^{-n}=G_{\chi^{-1}}(\zeta_\ell)^n \in \Z[\zeta_n].
 \]
\end{proof}

%Anmerkung: Die Aussage scheint bereits zu gelten, wenn statt mit $\ell^n$ mit $\ell^{n-1}$ multipliziert wird, dies konnte jedoch bisher nicht bewiesen werden.

\begin{koro}\label{koro:laufzeit_lr}
The universal elliptic Gau{\ss} sum $\tau_{\ell, n}(q)$ can be computed using
\[ 
O(\log n\M(n\prec(\ell, n)))
 \]
multiplications in $\Z$.
\end{koro}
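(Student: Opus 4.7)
The plan is to bound the cost of each of the four steps of Algorithm~\ref{algo:gs_lr_besser}, once Step~1 is replaced by Algorithm~\ref{algo:gs_lr_schritt1_schnell}, and then to account for the remaining operations needed to pass from $G_{\ell,n,\chi}(q)^n$ to the full expression $\tau_{\ell,n}(q)$ in \eqref{eq:tau_ln_lr}.

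First I would recall the observation already made in the text: Algorithm~\ref{algo:gs_lr_schritt1_schnell} uses the update formula \eqref{eq:b_i_update}, so the coefficients $a_{i,k}$ of $V(\zeta_\ell,q)$ can be read off and all values $b_i(c)$ can be assembled for $i=0,\dots,\prec(\ell,n)$ and $c\in\F_\ell^*$ in $O(\ell\prec(\ell,n))$ operations in $\Z$, yielding $T_1$ within that same bound. For Step~2 one exploits Lemma~\ref{lem:produkt_in_kleinerem_koerper}: the product $T_1=G_{\ell,n,\chi}(q)G_{\chi^{-1}}(\zeta_\ell)$ actually lies in $\Q[\zeta_n]((q))$, so raising it to the $n$-th power by repeated squaring amounts to $O(\log n)$ multiplications of elements whose total bit-/coefficient-size is $O(n\prec(\ell,n))$; this gives the bound $O(\log n\,\M(n\prec(\ell,n)))$ for Step~2, and this is the dominant contribution which already matches the claim.

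Next, I would verify that the remaining work really is absorbed into this bound. Step~3 computes $G_{\chi^{-1}}(\zeta_\ell)^n$ inside $\Z[\zeta_\ell]$ in $O(\log n\,\M(\ell n))$ operations. Step~4 uses the preceding lemma, namely that $\ell^n G_\chi(\zeta_\ell)^{-n}=G_{\chi^{-1}}(\zeta_\ell)^n\in\Z[\zeta_n]$, so the division $T_2 T_3^{-1}$ reduces to $\prec(\ell,n)$ multiplications in $\Z[\zeta_n]$, costing $O(\prec(\ell,n)\,\M(n))$. Both are dominated by the bound from Step~2. Finally, to form $\tau_{\ell,n}(q)$ one multiplies the resulting series by $p_1(q)^r$ (with $r\le5$ a fixed small integer) and divides by $\Delta(q)^{e_\Delta}$; since $\Delta(q)$ has leading coefficient $1$, computing $\Delta(q)^{-e_\Delta}$ up to the required precision uses $O(\log n\,\M(\prec(\ell,n)))$ operations, and multiplying the resulting $\Z[[q]]$-series into an element of $\Z[\zeta_n][[q]]$ of length $\prec(\ell,n)$ costs $O(n\,\M(\prec(\ell,n)))$, both again negligible.

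I do not expect a serious obstacle: the whole argument is a bookkeeping exercise once Algorithm~\ref{algo:gs_lr_schritt1_schnell} and Lemma~\ref{lem:produkt_in_kleinerem_koerper} are in place. The one point that deserves care is to check that every intermediate quantity can indeed be represented over $\Z$ (or $\Z[\zeta_n]$) after clearing denominators, so that the counts really are multiplications in $\Z$ rather than in $\Q$; this is guaranteed by Corollary~\ref{koro:gs_nenner} together with the preceding lemma that bounds the denominator of $G_\chi(\zeta_\ell)^{-n}$ by $\ell^n$. Collecting all contributions then shows that Step~2 dominates, which is exactly the bound asserted.
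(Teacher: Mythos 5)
Your proposal is correct and follows essentially the same route as the paper: the corollary is exactly the collection of the cost estimates already established in the text (Algorithm~\ref{algo:gs_lr_schritt1_schnell} giving $T_1$ in $O(\ell\prec(\ell,n))$ operations, Lemma~\ref{lem:produkt_in_kleinerem_koerper} making Step~2 the dominant $O(\log n\,\M(n\prec(\ell,n)))$ term, the negligible costs of $T_3$, $T_4$ and of the final multiplication by $p_1^r\Delta^{-e_\Delta}$, with integrality ensured by Corollary~\ref{koro:gs_nenner} and the lemma on $\ell^nG_\chi(\zeta_\ell)^{-n}$). Your bookkeeping of the remaining steps matches the paper's reasoning, so there is nothing to correct.
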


%%%%%%%%%%%%%%%%%%%%%%%%%%%%%%%%%%%%%%%%%%%%%%%%%%%%%%%
%%%%% Rationaler Ausdruck %%%%%%%%%%%%%%%%%%%%%%%%%%%%%
%%%%%%%%%%%%%%%%%%%%%%%%%%%%%%%%%%%%%%%%%%%%%%%%%%%%%%%

\subsection{Rational expression}\label{sec:erg}
%%%% Konkrete Ideen zur Berechnung
%%%% Konkreter Algorithmus zur Bestimmung des rationalen Ausdrucks
In this section we present algorithms by means of which it will be possible to compute the polynomial $Q$ from theorem \ref{satz:gs_modulfunktion_2} and thus to determine a rational expression in terms of $j(q)$ and $m_\ell(q)$ for the universal elliptic Gau{\ss} sums $\tau_{\ell, n}$ once their Laurent series have been computed as discussed in section \ref{sec:implementierung}.\\

Slightly rewriting equation \eqref{eq:gs_darstellung_ml} we obtain the representation
\begin{align}\label{eq:zaehler_poly_formel}
\tau_{\ell,n}(q)\frac{\partial M_\ell}{\partial Y}(m_\ell(q), j(q))=Q(m_\ell(q), j(q)),
 \end{align}
where $Q(X, Y)=\sum_{i=i_{min}}^{i_{max}}\sum_{k=0}^{v-1} c_{i, k}X^iY^k$ with $\deg2_Y(M_\ell)=v=\frac{\ell-1}{\gcd(\ell-1,\, 12)}$ according to \cite[pp.~61--62]{Mueller} holds. The polynomial $M_\ell$ can be determined using algorithm 5.8 from \cite{Mueller}. The left hand side of this equation can be computed up to a certain precision using the known $q$-expansions and the algorithms from section \ref{sec:implementierung}. As is evident from the definition of $m_\ell$ as well as $\eta$, the Laurent series $m_\ell(q)$ has order $v$. Furthermore, $\ord(j)=-1$ holds. This implies $\ord(m_\ell^ij^k)=iv-k$. Hence, all summands of the expression
\[ 
Q(m_\ell(q), j(q))=\sum_{i=i_{min}}^{i_{max}}\sum_{k=0}^{v-1}c_{i, k}m_\ell^ij^k
 \]
exhibit different orders, which allows to compute the coefficients $c_{i, k}$ successively using the precomputed left hand side of equation \eqref{eq:zaehler_poly_formel}. Full details are given in \cite{Diss}, where it is also shown that it suffices to compute all occurring Laurent series up to precision $(v+e_{\Delta})\ell$.\\

It turns out, however, that for complexity reasons it makes sense to replace the function $m_\ell(q)$ by another modular function $a_\ell(q)$ with similar properties. This function is mentioned in \cite{Mueller, Enge} as an alternative to $m_{\ell}(q)$ in the context of Schoof's algorithm. Its associated (minimal) polynomial $A_\ell(X, j(q))$ indeed exhibits a significantly smaller degree in $j$ and significantly smaller coefficients than $M_\ell(X, j(q))$ and was used, for example, in achieving the point-counting record described in \cite{Rekord}.\\

We first require the following
\begin{defi}
Let $f(\tau)$ be a modular function of weight $1$ and $r$ a prime. The $r$-th Hecke operator $T_r$ acts on $f$ via
\[ 
T_r(f)(\tau)=\frac{1}{r}\sum_{k=0}^{r-1}f\left(\frac{\tau+k}{r}\right)+f(r\tau).
 \]
\end{defi}

\begin{lemma}\cite[p.~74]{Mueller}\label{lem:funk_a_eig}
Let $\ell>3$, $s=\frac{24}{\gcd(24, \ell+1)}$ and let $r$ be an odd prime satisfying
\[ 
s \mid r-1, \quad \left( \frac{r}{\ell} \right)=1, \quad \left(\frac{\ell}{r}\right)=1.
 \]
Then the function
\begin{equation}\label{eq:A_def}
a_\ell(\tau)=\frac{T_r(\eta(\tau)\eta(\ell\tau))}{\eta(\tau)\eta(\ell\tau)}
 \end{equation}
is a modular function of weight $0$ for $\Gamma_0(\ell)$ which is holomorphic on $\mathbb{H}$. Furthermore, $a_\ell(\tau)$ is invariant under the Fricke-Atkin-Lehner involution $w_\ell$.
\end{lemma}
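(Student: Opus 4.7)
The plan is to derive the assertions from the standard transformation properties of the Dedekind eta function together with elementary facts about Hecke operators on modular forms with non-trivial multiplier system.

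First, I would verify that $\eta(\tau)\eta(\ell\tau)$ is a holomorphic modular form of weight $1$ for $\Gamma_0(\ell)$, with a multiplier system whose order divides $s = 24/\gcd(24, \ell+1)$. This follows from the well-known $24$-th root transformation formula for $\eta$ under $\SL_2(\Z)$ combined with a direct calculation of the cocycle on $\Gamma_0(\ell)$ for the combined function $\eta(\tau)\eta(\ell\tau)$: one checks that, on this subgroup, the multiplier reduces to an $s$-th root of unity, with $s$ exactly as above. Since $\eta$ is nowhere zero on $\mathbb{H}$ and its $q$-expansion begins with $q^{1/24}$, the product is holomorphic on $\mathbb{H}$ and has controlled behaviour at the cusps of $X_0(\ell)$.

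Next, I would apply the Hecke operator $T_r$. Because $(\ell/r) = 1$ forces $r \neq \ell$, so $T_r$ preserves weight-$1$ modular forms on $\Gamma_0(\ell)$. The crucial observation is that on a form carrying a non-trivial multiplier, $T_r$ acts with an $r$-dependent twist of that multiplier, and the congruence conditions on $r$ are calibrated precisely so that this twist becomes trivial: $s \mid r-1$ eliminates the $s$-th root twist coming from the $\eta$-multiplier, while the quadratic residue conditions $(r/\ell) = (\ell/r) = 1$ deal with the sign and $\ell$-part arising from the Atkin-Lehner component of the cocycle. Consequently, $T_r(\eta(\tau)\eta(\ell\tau))$ carries exactly the same multiplier as $\eta(\tau)\eta(\ell\tau)$, so the quotient $a_\ell(\tau)$ is a genuine weight-$0$ modular function for $\Gamma_0(\ell)$. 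Holomorphy on $\mathbb{H}$ is immediate from the non-vanishing of the denominator and the holomorphy of the numerator.

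Finally, for invariance under $w_\ell$, I would use the classical identity showing that $\tau \mapsto -1/(\ell\tau)$ sends $\eta(\tau)\eta(\ell\tau)$ to an explicit weight-$1$ factor times itself. Combined with the fact that $T_r$ commutes with $w_\ell$ precisely under the symmetric Legendre conditions $(r/\ell) = (\ell/r) = 1$, the same factor appears in the image of the numerator, cancels against the denominator, and yields $a_\ell \circ w_\ell = a_\ell$. The main obstacle lies not in the conceptual structure of this argument but in the careful bookkeeping of the multiplier system of $\eta(\tau)\eta(\ell\tau)$ under a general element of $\Gamma_0(\ell)$ and its interaction with the Hecke double cosets: checking that the various $24$-th roots of unity combine into precisely the form demanded by the stated conditions on $r$ is the delicate step, which is why the result is quoted from Mueller rather than reproved here.
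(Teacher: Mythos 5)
The paper never proves this lemma: it is quoted, statement only, from M\"uller's thesis (\cite[p.~74]{Mueller}), and the surrounding text merely uses it, so there is no in-paper argument to compare yours against. Measured against the standard proof in the cited source, your outline has the right structure: $\eta(\tau)\eta(\ell\tau)$ is a holomorphic weight-$1$ form on $\Gamma_0(\ell)$ with a multiplier system of order dividing $s$ (its expansion starts at $q^{(\ell+1)/24}=q^{z/s}$), the arithmetic conditions on $r$ are exactly what is needed for $T_r$ to send this space to itself, holomorphy of $a_\ell$ on $\mathbb{H}$ follows from the nonvanishing of $\eta$, and $w_\ell$-invariance from the fact that $\eta(\tau)\eta(\ell\tau)$ is a Fricke eigenform (picking up the factor $-i\sqrt{\ell}\,\tau$) together with compatibility of $T_r$ with $w_\ell$. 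Two caveats, though. First, your assertion that $T_r$ commutes with $w_\ell$ \emph{precisely} under the symmetric Legendre conditions is misplaced: for $r$ coprime to $\ell$ the Hecke operator always commutes with the Fricke involution on $\Gamma_0(\ell)$; the conditions $s\mid r-1$ and $\left(\frac{r}{\ell}\right)=\left(\frac{\ell}{r}\right)=1$ are instead calibrated so that the twist of the eta-multiplier produced by the Hecke coset representatives is trivial and so that $T_r(\eta(\tau)\eta(\ell\tau))$ again lies in $q^{z/s}\Z[[q]]$ with the same multiplier -- compare \eqref{eq:Tr_lr}, where $s \mid r-1$ is what makes $\frac{(r-1)z}{s}$ integral. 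Second, the genuinely delicate content -- the explicit computation of the multiplier of $\eta(\tau)\eta(\ell\tau)$ on $\Gamma_0(\ell)$ (Jacobi-symbol factors and all) and its interaction with the $T_r$ cosets -- is precisely the part you defer, so what you have is a correct road map rather than a complete proof; that is, however, consistent with the paper itself, which defers the whole argument to \cite{Mueller}.
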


In order to perform actual computations using $a_\ell(\tau)$ we need its Laurent series up to the required precision. For its determination we use formula  \eqref{eq:eta_def} for the Laurent series of the $\eta$-function and subsequently apply
\begin{prop}\cite[p.~74]{Mueller}\label{prop:hecke_lr}
Let 
\[ 
a(\tau)=\exp\left(2 \pi i \tau \frac{z}{s}\right)\sum_{k=0}^{\infty}a_k \exp(2\pi ik \tau)
 \]
be the Laurent series of a function $a(\tau)$, where $\gcd(z, s)=1$.
Then
\begin{align}\label{eq:Tr_lr}
T_r(a(\tau))=\exp\left(2 \pi i \tau \frac{z}{s} \right)& \left( \sum_{\substack{k=0\\r\, \mid\, ks+z}}^{\infty} a_k \exp\left(2 \pi i \tau \frac{ks+(1-r)z}{rs}\right) \right.\nonumber \\
& \left. {} + \sum_{k=0}^{\infty} a_k \exp\left(2 \pi i \tau kr+\frac{(r-1)z}{s} \right) \right)
\end{align}
holds.\\
\end{prop}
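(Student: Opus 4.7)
The plan is to apply the definition of $T_r$ directly to the given Laurent expansion of $a(\tau)$ and to simplify the two resulting contributions separately.

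First, I would compute $a(r\tau)$ by substitution into the Laurent series. This immediately yields
\[ a(r\tau) = e^{2\pi i r\tau z/s}\sum_{k=0}^{\infty} a_k\, e^{2\pi i kr\tau} = e^{2\pi i \tau z/s}\sum_{k=0}^{\infty} a_k\, e^{2\pi i \tau(kr+(r-1)z/s)}, \]
since $rz/s = z/s + (r-1)z/s$. After fixing the typographical slip noted below, this is precisely the second summand on the right-hand side of \eqref{eq:Tr_lr}.

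Next, I would handle the average $\frac{1}{r}\sum_{j=0}^{r-1} a\bigl(\frac{\tau+j}{r}\bigr)$. Substituting the Laurent series, interchanging the finite sum over $j$ with the infinite sum over $k$, and collecting the $j$-dependence gives
\[ \frac{1}{r}\sum_{j=0}^{r-1} a\Bigl(\frac{\tau+j}{r}\Bigr) = \sum_{k=0}^{\infty} a_k\, e^{2\pi i \tau(ks+z)/(rs)}\cdot\frac{1}{r}\sum_{j=0}^{r-1} e^{2\pi i j(ks+z)/(rs)}. \]
Using the hypothesis $\gcd(r,s)=1$ (a consequence of $s \mid r-1$ in lemma \ref{lem:funk_a_eig}), the inner sum reduces to a character sum over $\Z/r\Z$ which evaluates to $1$ precisely when $r\mid ks+z$ and to $0$ otherwise. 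Extracting the common factor $e^{2\pi i\tau z/s}$ via the algebraic identity $(ks+z)/(rs) = z/s + (ks+(1-r)z)/(rs)$ then produces exactly the first summand on the right-hand side of \eqref{eq:Tr_lr}, restricted to those indices $k$ with $r \mid ks+z$.

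Adding the two contributions proves the formula. The main technical subtlety lies in the evaluation of the inner character sum, because the exponents are fractional: one must regard $e^{2\pi i j(ks+z)/(rs)}$ as a power of a fixed primitive $rs$-th root of unity and exploit the coprimality of $r$ and $s$ to collapse the sum to an honest orthogonality relation modulo $r$. Once this reduction is carried out, the remainder of the proof is a direct algebraic verification of the exponents.
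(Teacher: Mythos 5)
The paper gives no proof of this proposition at all (it is quoted from M\"uller's thesis), so your attempt has to stand on its own, and it has a genuine gap at its central step. The computation of $a(r\tau)$ is fine, but the evaluation of the averaged translates is not: the inner sum $\frac{1}{r}\sum_{j=0}^{r-1}e^{2\pi i j(ks+z)/(rs)}$ is \emph{not} an orthogonality relation modulo $r$, and it does not equal $1$ or $0$ according to whether $r\mid ks+z$. The ratio $e^{2\pi i(ks+z)/(rs)}$ is an $rs$-th root of unity whose order is always divisible by $s$, since $\gcd(ks+z,s)=\gcd(z,s)=1$; coprimality of $r$ and $s$ does not remove this $s$-part, because the exponent is $j(ks+z)$, not $js(ks+z)$. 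Concretely, take $s=2$, $z=1$, $r=3$ (so $s\mid r-1$ holds): for $k=0$ one gets $\frac13\bigl(1+e^{\pi i/3}+e^{2\pi i/3}\bigr)\neq 0$ although $r\nmid ks+z$, while for $k=1$, where $r\mid ks+z$, one gets $\frac13(1-1+1)=\frac13\neq 1$. So with the representatives $\frac{\tau+j}{r}$ taken literally from the definition quoted in the paper, the average neither kills the terms with $r\nmid ks+z$ nor reproduces the coefficient $a_k$ on the surviving ones; the ``collapse to an honest orthogonality relation modulo $r$'' that you invoke cannot be carried out.

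What makes the stated formula true is a different choice of coset representatives, adapted to the fact that $a(\tau)$ has period $s$ rather than $1$: one must use the translates $\frac{\tau+js}{r}$, $0\le j<r$ (equivalently, apply the standard operator to the expansion in the variable $q_s=e^{2\pi i\tau/s}$, i.e.\ conjugate by $\tau\mapsto s\tau$). For a $1$-periodic function these representatives coincide with $\frac{\tau+j}{r}$ because $\gcd(r,s)=1$, which is why the quoted definition is harmless in the classical setting, but for period $s$ they differ, and this is exactly the point at issue. With the adapted representatives the $j$-dependence of the $k$-th term is $e^{2\pi i j(ks+z)/r}$, a genuine character of $\Z/r\Z$; orthogonality then shows that precisely the terms with $r\mid ks+z$ survive with coefficient $a_k$, and the identity $\frac{ks+z}{rs}=\frac{z}{s}+\frac{ks+(1-r)z}{rs}$ together with $s\mid r-1$ (needed so that the surviving exponents again lie in $\frac{z}{s}+\Z$, and used likewise for the $a(r\tau)$ term) gives \eqref{eq:Tr_lr}. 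Your proof needs either to adopt and justify this form of the operator, or to explain why the representatives may be replaced; also note that the proposition itself only assumes $\gcd(z,s)=1$, so the conditions $\gcd(r,s)=1$ and $s\mid r-1$ you import from lemma \ref{lem:funk_a_eig} should be made explicit hypotheses of the argument.
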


We now write $v:=-\ord(a_\ell)$. The minimal polynomial $A_\ell(X, j(\tau)) \in \C[j(\tau)][X]$ of $a_\ell(\tau)$ has the form
\begin{equation}\label{eq:mipo_a_l}
A_\ell(X, j(\tau))=\sum_{i=0}^{\ell+1}\sum_{k=0}^{2v} a_{i, k}X^ij(\tau)^k
 \end{equation}
according to \cite[p.~77]{Mueller}. An easy calculation shows that the congruence $ks+z \equiv 0 \mod r$ implies $v=0$ for $\ell<29$ as well as for $\ell \in \{37, 43, 67, 163\}$. 
%% Es scheint hier eine Verbindung zu den imaginär-quadratischen Zahlkörpern mit Klassenzahl 1 zu geben. Zu denen gehören jedoch nicht diejenigen für d=-13, -17, 19, -23, -37
Thus, in this case the minimal polynomial $A_\ell(X, j)$ has degree $0$ in $j$. Hence, $a_\ell(\tau) \in \C$ holds and for these mostly small values of $\ell$ we still have to resort to $m_\ell(\tau)$ for computations. To determine the minimal polynomial $A_\ell(X, j(\tau))$ of $a_\ell(\tau)$ we use the algorithm presented in \cite[p.~79]{Mueller}.\\
%In the special case $v=1$, i.~e. for $\ell \in \{29, 31, 41, 47\}$ this algorithm can be significantly simplified, as pointed out in \cite[p.~267]{Morain}.\\

We first prove an analogue to proposition 2.16 of \cite{ELGS} and equation \eqref{eq:gs_darstellung_ml}.
\begin{prop}\label{prop:darst_a_l}
Let $\ell, n, \chi$ be as in section \ref{sec:prerequisites}. Then for a holomorphic modular function $g(\tau)$ of weight $0$ for $\Gamma_0(\ell)$, in particular for $\tau_{\ell, n}$, there exists a polynomial $Q(X, Y) \in \C[X, Y]$ with $\deg2_Y(Q)<\deg2_Y(A_\ell)$ such that
\begin{equation}\label{eq:darst_a_l}
g(\tau)\frac{\partial A_\ell}{\partial Y}(a_\ell(\tau), j(\tau))=Q(a_\ell(\tau),j(\tau))
\end{equation}
holds.
\end{prop}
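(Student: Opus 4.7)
I would adapt the argument from proposition 2.16 of \cite{ELGS}, substituting $a_\ell$ for $m_\ell$. The essential change is that $a_\ell$, being $w_\ell$-invariant by lemma \ref{lem:funk_a_eig}, has poles at both cusps of $\Gamma_0(\ell)$, whereas $m_\ell$ vanishes at one cusp and has a pole at the other; this is precisely what will allow us to dispense with the extra factor $m_\ell(q)^{-k}$ of \eqref{eq:gs_darstellung_ml}.

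First, I identify the function field: since $A_\ell(X, j(\tau)) \in \C(j(\tau))[X]$ is the minimal polynomial of $a_\ell(\tau)$, of degree $\ell+1 = [\SL_2(\Z):\Gamma_0(\ell)]$, the inclusion $\C(j, a_\ell) \subseteq \An_0(\Gamma_0(\ell))$ is a degree equality, and hence $g \in \C(j, a_\ell)$. Equivalently, viewing $A_\ell$ as an irreducible polynomial in $Y$ over $\C(a_\ell)$ of degree $d := \deg2_Y(A_\ell) = 2v$, we have $\C(j, a_\ell) \cong \C(a_\ell)[Y]/(A_\ell(a_\ell, Y))$.

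Second, I construct $Q$ by Lagrange interpolation along the $d$ sheets of $X_0(\ell) \to \mathbb{P}^1_{a_\ell}$. Let $j = j^{(1)}, \ldots, j^{(d)}$ denote the roots of $A_\ell(a_\ell, Y)$ in a splitting field, with corresponding Galois conjugates $g = g^{(1)}, \ldots, g^{(d)}$ of $g$, and set $c(X) := \lc_Y(A_\ell)$. Define
\[
Q(X, Y) := \sum_{i=1}^d g^{(i)}(X)\, c(X) \prod_{k \neq i}\bigl(Y - j^{(k)}(X)\bigr).
\]
Then $\deg2_Y(Q) \leq d - 1 < \deg2_Y(A_\ell)$, and the symmetry of $Q$ in the pairs $(g^{(i)}, j^{(i)})$ places its coefficients in the fixed field $\C(a_\ell)$ by Galois invariance. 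Substituting $Y = j = j^{(1)}$, every term with $i > 1$ vanishes (each contains the factor $j - j^{(1)} = 0$), leaving
\[
Q(a_\ell, j) = g \cdot c(a_\ell) \prod_{k \neq 1}\bigl(j - j^{(k)}(a_\ell)\bigr) = g \cdot \frac{\partial A_\ell}{\partial Y}(a_\ell, j),
\]
as required.

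The main obstacle is promoting the coefficients of $Q$ from $\C(a_\ell)$ to $\C[a_\ell]$. Write $Q(X, Y) = \tilde Q(X, Y)/q(X)$ with $\tilde Q \in \C[X, Y]$ and $q \in \C[X]$ coprime in $\C[X, Y]$, and $\deg2_Y(\tilde Q) < d$; one must show $q$ is constant. Assume otherwise and let $\alpha \in \C$ be a root of $q$. Since $a_\ell$ is $w_\ell$-invariant and takes the value $\infty$ at both cusps, the fiber of $a_\ell : X_0(\ell) \to \mathbb{P}^1$ over the finite value $\alpha$ lies entirely in $\mathbb{H}/\Gamma_0(\ell)$, where $g$ is holomorphic. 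Consequently, $g \cdot \partial A_\ell/\partial Y(a_\ell, j)$ is holomorphic on this fiber, forcing $\tilde Q(a_\ell, j)$ to vanish there to at least the order of $q(a_\ell)$. A multiplicity count along the fiber (of total degree $d$, even in the presence of ramification of $a_\ell$) yields that $\tilde Q(\alpha, Y) \in \C[Y]$, being of degree less than $d$ yet possessing at least $d$ zeros counted with multiplicity, vanishes identically. Hence $(X - \alpha) \mid \tilde Q$ in $\C[X, Y]$, contradicting coprimality. Thus $q$ is constant, $Q \in \C[X, Y]$, and \eqref{eq:darst_a_l} holds.
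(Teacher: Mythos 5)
Your overall strategy is sound and close to the paper's: the field identification $\C(a_\ell,j)=\An_0(\Gamma_0(\ell))$, the Lagrange-interpolation construction of $Q$ with coefficients in $\C(a_\ell)$, and the observation that $a_\ell$ is holomorphic on $\mathbb{H}$ with poles at \emph{both} cusps (by $w_\ell$-invariance) are all correct; the last point is exactly the geometric input the paper extracts, in the form of the surjectivity of $a_\ell:\mathbb{H}\rightarrow\C$, before invoking corollary 2.15 of \cite{ELGS} to clear denominators. The genuine gap is in your final step. From $\ord_P\bigl(\tilde Q(a_\ell,j)\bigr)\geq e_P$ at the points $P$ of the fibre $a_\ell^{-1}(\alpha)$ you conclude that the one-variable polynomial $\tilde Q(\alpha,Y)$ has at least $d=\sum_P e_P$ roots counted with multiplicity. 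This does not follow. If $y$ is a root of $A_\ell(\alpha,Y)$, the multiplicity it is forced to have in $\tilde Q(\alpha,Y)$ is only $\max_{P\mapsto(\alpha,y)}\lceil e_P/r_P\rceil$, where $r_P=\ord_P\bigl(j-j(P)\bigr)$, not $\sum_{P\mapsto(\alpha,y)}e_P$: (i) two distinct points of the fibre on $X_0(\ell)$ may share the same $j$-value (the plane model $A_\ell(X,Y)=0$ has singular points, and $\alpha$ is handed to you as a root of the hypothetical denominator, so you cannot avoid such lines $X=\alpha$); their contributions land on the same root and do not add; (ii) at fibre points lying over $j=0$ or $j=1728$ one has $r_P\in\{2,3\}$, which further weakens the forced multiplicity. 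Hence the count can fall short of $d$ and no contradiction with $\deg2_Y\tilde Q<d$ is obtained.

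The repair is available and stays within your framework: instead of arguing fibrewise with $\tilde Q(\alpha,Y)$, show that each $Y$-coefficient of $Q(a_\ell,Y)$ is holomorphic on $\mathbb{H}$, and then use that a holomorphic element of $\C(a_\ell)$ lies in $\C[a_\ell]$ — which is immediate from the surjectivity of $a_\ell$ on $\mathbb{H}$ that you have essentially established (non-constancy, i.e. $v\geq 1$, plus poles only at the cusps). For the holomorphy of the coefficients one can argue integrally: since $a_\ell^{-1}(\infty)$ consists exactly of the cusps, the functions regular on $\mathbb{H}$, in particular $g$ and $j$, are integral over $\C[a_\ell]$; the coefficients of $Q$ are Galois-invariant polynomial expressions in their conjugates (traces), hence integral elements of $\C(a_\ell)$, hence in $\C[a_\ell]$. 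Note that this uses that $A_\ell$ has coefficients in $\C[X]$ with leading $Y$-coefficient a nonzero constant, a fact you never verify but which the paper proves ($a_{0,2v}\neq 0$ and $a_{i,2v}=0$ for $i>0$) inside its surjectivity argument. The paper itself short-circuits all of this by citing corollary 2.15 of \cite{ELGS} and only proving surjectivity of $a_\ell$, which it does via the coefficient structure of $A_\ell$ and the surjectivity of $j$ rather than via your (equally valid) compact-Riemann-surface argument.
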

\begin{proof}
Applying corollary 2.15 of \cite{ELGS} and setting $f(\tau)=a_\ell(\tau)$, it suffices to show that $a_\ell: \mathbb{H} \rightarrow \C$ is surjective, since this implies the holomorphic functions in $\C(a_\ell(\tau))$ are given by $\OO=\C[a_\ell(\tau)]$.\\

By definition, $A_\ell(a_\ell(\tau), j(\tau))=0$ holds. Applying the Fricke-Atkin-Lehner involution $w_\ell$ to this equation we obtain that $a_\ell(\tau)$ is also a root of $A_\ell(X, j(\ell\tau))$. Equation \eqref{eq:mipo_a_l} yields
\begin{equation}\label{eq:a_l_mipo_ell_sym}
\sum_{i=0}^{\ell+1}X^i\sum_{k=0}^{2v} a_{i, k}j(\ell\tau)^k=A_\ell(X, j(\ell\tau))=\sum_{i=0}^{\ell+1}s_{\ell+1-i}(\tau)X^i,
 \end{equation}
where $s_{\ell+1-i}(\tau)$ are the elementary-symmetric polynomials in the roots $a_\ell(\tau)=f_0(\tau), \ldots, f_\ell(\tau)$ of $A_\ell(X, j(\ell\tau))$. In \cite[p.~77]{Mueller} it is shown the Laurent series of these functions have the orders
\[ 
\ord(f_i)=-v,\ 0\leq i<\ell, \quad \ord(f_\ell)=-\ell v,
 \]
from which we conclude
\[
\ord(s_0)=0,\quad ord(s_{\ell+1-i})=-\ell v-(\ell-i)v=-(2\ell-i)v,\ 0\leq i \leq \ell.
 \]
Using $\ord(j(\ell\tau))=-\ell$ equation \eqref{eq:a_l_mipo_ell_sym} implies $a_{0, 2v}\neq 0$ and $a_{i, 2v}=0$ for $i>0$. So $A_\ell(c, Y)$ is a polynomial of degree $2v$ in $Y$ for any $c \in \C$. Due to the surjectivity of $j$ there exists $\tau \in \mathbb{H}$ such that $A_\ell(c, j(\tau))=0$. Now the roots of $A_\ell(X, j(\tau))$ are well known to be $a_\ell(S_k(\tau))$ for a set of representatives $S_k$, $0\leq k \leq \ell$, of $\textup{SL}_2(\Z)/\Gamma_0(\ell)$, cf. Lemma 2.11 of \cite{ELGS}. Thus, for one of these matrices $S_k$ the identity $c=a_\ell(S_k\tau)$ holds. Hence, $a_\ell$ attains all values $c \in \C$.
\end{proof}

Equation \eqref{eq:darst_a_l} implies the enumerator of the rational expression for $\tau_{\ell, n}$ consists of monomials of the form $a_\ell^ij^k$ with $0\leq k \leq 2v-1$. However, $\ord(a_\ell^ij^k)=-iv-k=\ord(a_\ell^{i-1}j^{k+v})$ obviously holds for $k<v$. Hence, we are faced with pairs of two monomials of equal order. This implies the coefficients $c_{i, k}$ of the polynomial $Q$ cannot be successively computed, which is the case when using $m_\ell$ as pointed out above. Of course they can still be obtained by inverting the matrix corresponding to the linear system defined by equation \eqref{eq:darst_a_l}. However, this would require significantly higher costs.
%%%%%%%%%%%%%%%%%%%%%%%%%%%%%%%%%%%%%%%%%%%%%%%%%%%%%%%%%%%%%%%%
%%% Zerlegung in invarianten und anti-invarianten Teil     %%%%
%%%%%%%%%%%%%%%%%%%%%%%%%%%%%%%%%%%%%%%%%%%%%%%%%%%%%%%%%%%%%%%%
In order to solve this problem we use the following
\begin{lemma}\label{lem:zerlegung_wl}
Let $f(\tau)$ be a function which is anti-invariant under the action of $w_\ell$, i.~e., let $f^{\ast}=-f$. Let $g(\tau) \in \mathbf{A}_0(\Gamma_0(\ell))$ not be invariant under $w_\ell$. Then
\[ 
g(\tau)=\frac{g(\tau)+g^{\ast}(\tau)}{2}+\frac{(g(\tau)-g^{\ast}(\tau))f(\tau)}{2f(\tau)}=:g^{(1)}(\tau)+\frac{g^{(2)}(\tau)}{f(\tau)}
 \]
holds if $f(\tau)\neq 0$, where $g^{(1)}(\tau), g^{(2)}(\tau)$ are invariant under $w_\ell$.
\end{lemma}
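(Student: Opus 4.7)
The plan is to verify the statement by direct calculation, treating the decomposition as an algebraic identity and then separately checking $w_\ell$-invariance of $g^{(1)}$ and $g^{(2)}$.

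First, I would confirm that the decomposition indeed equals $g(\tau)$. Since $f(\tau) \neq 0$ by hypothesis, the factor $f(\tau)$ in the numerator and denominator of the second summand cancels, giving
\[
g^{(1)}(\tau) + \frac{g^{(2)}(\tau)}{f(\tau)} = \frac{g(\tau)+g^{\ast}(\tau)}{2} + \frac{g(\tau)-g^{\ast}(\tau)}{2} = g(\tau),
\]
so the asserted equality is nothing but the standard splitting of $g$ into symmetric and anti-symmetric parts with respect to the involution $w_\ell$, renormalized by multiplication and division with the anti-invariant function $f$.

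Second, I would verify the $w_\ell$-invariance of the two pieces, using that $w_\ell$ is an involution (so $g^{\ast\ast} = g$) and the hypothesis $f^{\ast} = -f$. Applying $w_\ell$ to $g^{(1)} = (g + g^{\ast})/2$ permutes the two summands and leaves $g^{(1)}$ fixed. Applying $w_\ell$ to $g^{(2)} = (g - g^{\ast})f/2$ turns the factor $(g - g^{\ast})$ into $(g^{\ast} - g) = -(g - g^{\ast})$ and the factor $f$ into $-f$; the two sign changes cancel, so $g^{(2)}$ is invariant as well.

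There is essentially no serious obstacle here: the statement is a bookkeeping identity, and the only hypothesis that plays an active role beyond the involutive nature of $w_\ell$ is $f(\tau) \neq 0$, which is needed to make the quotient $g^{(2)}/f$ meaningful. The substantive usefulness of the lemma will only emerge later, when an explicit anti-invariant $f$ (with a tractable Laurent expansion) is chosen in order to reduce the computation of the polynomial $Q$ to two problems whose monomials do have distinct orders.
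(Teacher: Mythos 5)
Your verification is correct and follows exactly the argument the paper has in mind: the paper's proof simply says the claim is "evident from the prerequisites, since $g^{**}=g$", which is precisely the sign bookkeeping you spell out (the involutivity of $w_\ell$ together with $f^*=-f$). No discrepancy; your write-up just makes the one-line proof explicit.
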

\begin{proof}
Evident from the prerequisites, since $g^{**}=g$.
\end{proof}
To determine the rational expression for $\tau_{\ell, n}(q)$, we proceed as follows:

\begin{algorithm}[Determining the rational expression for $\tau_{\ell, n}$ using $a_\ell$]
\label{algo:algo_zerlegung_wl}
\begin{algorithmic}[1] 
    \REQUIRE{$\ell, n, \prec(\ell, n)$}
    \ENSURE{Rational expression for $\tau_{\ell, n}(q)$}

	\STATE Determine $g \in \mathbf{A}_0(\Gamma_0(\ell))$ with $g^{\ast}=-g$.\label{algo:schritt_anti_inv_funk}\\
	\STATE Determine $\tau_{\ell, n}$ and $\tau_{\ell, n}^*$ up to precision $\prec(\ell, n)$ using algorithm \ref{algo:gs_lr_besser} and the formulae from section \ref{sec:prerequisites} and equation \eqref{eq:wl_x}.\\
 	\STATE Compute $\tau_{\ell, n}^{(1)}, \tau_{\ell, n}^{(2)}$ according to lemma \ref{lem:zerlegung_wl}.\\
	\STATE For both functions determine rational expressions $R_1$, $R_2$ in terms of $a_\ell$ and $j$.\label{algo:schritt_ausdruck_a}\\
	\STATE Compute $\tau_{\ell, n}:=R_1(a_\ell, j)+\frac{R_2(a_\ell, j)}{g}$.
\end{algorithmic}
\end{algorithm}

We have to specify how the steps \ref{algo:schritt_anti_inv_funk} and \ref{algo:schritt_ausdruck_a} are done in practice. Concerning the determination of $g$ we remark that
$g(\tau):=m_\ell^{\ast}(\tau)-m_\ell(\tau)$ is anti-invariant under $w_\ell$ and thus fulfils the conditions we require. In addition, it has a relatively small order, which is relevant from complexity reasons. Furthermore, for every $g$ with these properties $w_\ell(g^2)=g^2$ obviously holds. Hence, we can compute a rational expression for $g^2$ in terms of $a_\ell$ and $j$ using algorithm \ref{algo:algo_ausdruck_wl} below. The value for $g$ itself is calculated on the elliptic curve in question, as is detailed in section \ref{sec:elkies_gs}. Before presenting an algorithm for step \ref{algo:schritt_ausdruck_a}, we investigate the orders of several Laurent series.
%%%%%%%%%%%%%%%%%%%%%%%%%%%%%%%%%%%%%%%%%%%%%%%%%%%%%%%%%%%%%%%%
%%% Bedingungen für das Nichtverschwinden von Koeffizienten %%%%
%%%%%%%%%%%%%%%%%%%%%%%%%%%%%%%%%%%%%%%%%%%%%%%%%%%%%%%%%%%%%%%%
\begin{lemma}\label{lem:schranke_abl_wl}
Let $A_\ell(X, j)$ be the minimal polynomial of $a_\ell$. Hence,
\begin{equation}\label{eq:mipo_a_l_2}
0=A_\ell(a_\ell, j)=\sum_{i=0}^{\ell+1}\sum_{k=0}^{2v} a_{i, k}a_\ell^i j^k
 \end{equation}
holds according to \cite[p.~77]{Mueller}, where $\ord(a_\ell)=-v$. Then the following statements hold true:
\begin{enumerate}
\item $\ord\left(\frac{\partial}{\partial Y}A_\ell(a_\ell, j) \right)\geq -(\ell+1)v+1.$
\item $\ord \left(\frac{\partial}{\partial Y}A_\ell(a_\ell, j(q^\ell)) \right) \geq -(2v-1)\ell.$
\end{enumerate}
\end{lemma}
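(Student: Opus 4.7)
The plan is to derive both bounds from a single ``Newton-polygon'' constraint on the coefficients $a_{i,k}$ of $A_\ell(X,Y)$, namely
\[
a_{i,k}\neq 0\ \Longrightarrow\ \ell k+iv\leq 2\ell v.
\]
This is essentially implicit in the proof of Proposition~\ref{prop:darst_a_l}. Matching coefficients of $X^i$ in equation~\eqref{eq:a_l_mipo_ell_sym} yields
\[
\sum_{k=0}^{2v}a_{i,k}\,j(\ell\tau)^k=s_{\ell+1-i}(\tau)\qquad(0\leq i\leq\ell+1).
\]
For $0\leq i\leq\ell$ one has $\ord(s_{\ell+1-i})=-(2\ell-i)v$; since the summands on the left have distinct $q$-orders $-\ell k$ no cancellation of the leading term is possible, so the largest $k$ with $a_{i,k}\neq 0$ satisfies $\ell k\leq(2\ell-i)v$, i.e.\ $\ell k+iv\leq 2\ell v$. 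For $i=\ell+1$ the right-hand side is $s_0=1$, which forces $a_{\ell+1,k}=0$ for $k\geq 1$ (already observed in the proof of Proposition~\ref{prop:darst_a_l}).

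For part~(1) I would then expand
\[
\frac{\partial}{\partial Y}A_\ell(a_\ell,j)=\sum_{i=0}^{\ell+1}\sum_{k=1}^{2v}k\,a_{i,k}\,a_\ell^{\,i}\,j^{k-1},
\]
so each surviving summand has order $-iv-(k-1)$. Terms with $i=\ell+1$ vanish, and for $0\leq i\leq\ell$ the constraint gives $k\leq 2v-iv/\ell$, hence
\[
iv+k\leq \frac{i(\ell-1)}{\ell}v+2v\leq(\ell-1)v+2v=(\ell+1)v,
\]
which yields $-iv-(k-1)\geq-(\ell+1)v+1$.

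For part~(2) the same constraint is applied after noting $\ord(j(q^\ell))=-\ell$: a summand in the expansion of $\frac{\partial}{\partial Y}A_\ell(a_\ell,j(q^\ell))$ now has order $-iv-\ell(k-1)$, and $\ell k+iv\leq 2\ell v$ rewrites directly as $iv+\ell(k-1)\leq(2v-1)\ell$, yielding the claim immediately. The only mildly non-routine step is extending the Newton-polygon constraint from the case $k=2v$ (handled explicitly in Proposition~\ref{prop:darst_a_l}) to arbitrary $k$; the argument is verbatim, since the distinct leading $q$-orders of the powers $j(\ell\tau)^k$ pin down the maximal $k$ with $a_{i,k}\neq 0$ from $\ord(s_{\ell+1-i})$ alone.
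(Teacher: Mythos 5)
Your proof is correct, but it is organized differently from the paper's. The paper first works with the untransformed identity $A_\ell(a_\ell,j)=0$: it classifies which pairs of monomials $a_\ell^{i}j^{k}$ can share a $q$-order, deduces the bound \eqref{eq:Mipo_A_koeff_schranke} ($a_{i,k}\neq 0\Rightarrow iv+k\leq(\ell+1)v$), reads off part (1), and then disposes of part (2) with the remark that the same analysis can be repeated ``analogously'' after applying $w_\ell$ — note that \eqref{eq:Mipo_A_koeff_schranke} by itself would not give part (2) (a hypothetical $a_{1,2v}\neq 0$ is allowed by it but would violate the bound), so that second pass is genuinely needed. You instead extract a single, sharper Newton-polygon constraint $a_{i,k}\neq 0\Rightarrow \ell k+iv\leq 2\ell v$ (for $0\leq i\leq\ell$, plus $a_{\ell+1,k}=0$ for $k\geq1$) by matching coefficients of $X^{i}$ in \eqref{eq:a_l_mipo_ell_sym} against the elementary symmetric functions $s_{\ell+1-i}$ and using that the powers $j(\ell\tau)^{k}$ have pairwise distinct orders $-\ell k$; both parts, and the paper's \eqref{eq:Mipo_A_koeff_schranke}, then fall out of this one inequality. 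What your route buys: the distinct-order argument on the $w_\ell$-side is immediate and only needs $\ord(s_{\ell+1-i})\geq-(2\ell-i)v$ (automatic from the orders of the roots $f_i$), and it sidesteps the slightly delicate point in the paper's pairing argument that two equal-order monomials could in principle cancel at leading order unless one also invokes the vanishing of the extremal coefficients; in your setup that vanishing ($a_{\ell+1,k}=0$ for $k\geq1$, from $s_0=1$) comes for free. The inputs are the same as the paper's (equation \eqref{eq:a_l_mipo_ell_sym} and the orders $\ord(f_i)=-v$, $\ord(f_\ell)=-\ell v$ from the proof of Proposition \ref{prop:darst_a_l}), so this is a reorganization rather than new machinery, but it is a cleaner and slightly more robust derivation.
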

\begin{proof}
First of all we examine which of the coefficients of $A_\ell$ do not vanish. Since $\ord(a_\ell^ij^k)=-iv-k$ holds, searching for two summands $a_\ell^{i_1}j^{k_1}$, $a_\ell^{i_2}j^{k_2}$ of equal order leads to the equation
\[ 
(i_2-i_1)v=(k_1-k_2).
 \]
As this implies $v \mid k_1-k_2$, depending on the values of $k_1, k_2$ the equation exhibits the solutions $(i, 0), (i-1, v), (i-2, 2v)$ and $(i, k), (i-1, k+v)$ for $0 < k < v$. Paying heed to the restrictions for possible values of $i$ and $k$ this directly implies $\ord(a_\ell^ij^k)\geq -(\ell+1)v$ for all summands whose coefficients do not vanish. 
%Minimum wird erreicht für (l+1, 0) <-> (l, v) <-> (l-1, 2v), (i=l+1, k>0 geht nicht, da A_\ell) Mipo ist!
Hence,
\begin{equation}\label{eq:Mipo_A_koeff_schranke}
a_{i, k}\neq 0\quad \Rightarrow\quad -iv-k \geq -(\ell+1)v
\end{equation}
holds. We now investigate
\[ 
\frac{\partial}{\partial Y}A_\ell(a_\ell, j)=\sum_{i=0}^{\ell+1}\sum_{k=0}^{2v-1} (k+1)a_{i, k+1}a_\ell^ij^k.
 \]
From \eqref{eq:Mipo_A_koeff_schranke} we deduce
\[ 
a_{i, k+1} \neq 0 \quad \Rightarrow\quad -iv-(k+1)\geq -(\ell+1)v \quad\Rightarrow\quad \ord(a_\ell^ij^k)=-iv-k\geq -(\ell+1)v+1.
 \]
The second assertion is proved analogously after applying $w_\ell$ to equation \eqref{eq:mipo_a_l_2}.
% Applying $w_\ell$ yields
%\[ 
%0=A_\ell(a_\ell^*, j^*)=A_\ell(a_\ell, j(q^\ell))=\sum_{i=0}^{\ell+1}\sum_{k=0}^{k=2v} a_{i, k}a_\ell^i j(q^\ell)^k.
% \]
%Here, $\ord(a_\ell^ij(q^\ell)^k)=-iv-k\ell$ holds and an easy calculation shows
%\begin{equation}
%a_{i, k} \neq 0\quad\Rightarrow\quad -iv-k\ell\geq -2v\ell.\label{Mipo_A_koeff_schranke_wl}
%\end{equation}
%and thus
%\[ 
%a_{i, k+1}\neq 0 \quad\Rightarrow\quad -iv-(k+1)\ell \geq -2v\ell \quad\Rightarrow\quad 
%\ord(a_\ell^ij(q^\ell)^k)=-iv-k\ell\geq-(2v-1)\ell.
% \]
%% -i_1v-k_1l=-i_2v-k_2l -> (i_2-i_1)v=(k_1-k_2)l -> i_2=i_1+l, k_1=k_2+v
%% Falls i_2=l+1 -> k_2=0 -> ord(.)=-(l+1)v
%% Falls i_2=l -> k_2=v -> (i_1, k_1)=(0, 2v) und ord(A^lj(q^l)^k)=ord(j^(q^l)^2v)=-2vl
%\end{enumerate}
\end{proof}

Let $f(\tau) \in \mathbf{A}_0(\Gamma_0(\ell))$ be invariant under the action of $w_\ell$. According to proposition \ref{prop:darst_a_l} there exists a polynomial $Q \in \C[X, Y]$ with $\deg2_Y(Q)<2v$ such that
\begin{equation}\label{eq:ausdruck_A}
f(\tau)\frac{\partial}{\partial Y}A_\ell(a_\ell, j)=Q(a_\ell, j)\quad \text{and}
\quad f(\tau)\frac{\partial}{\partial Y}A_\ell(a_\ell, j(q^\ell))=Q(a_\ell, j(q^\ell)),
 \end{equation}
where the second equation arises from the application of $w_\ell$ to the first one.
%%%%%%%%%%%%%%%%%%%%%%%%%%%%%%%%%%%%%%%%%%%%%%%%%%%%%%%%%%%%%%%%
%%% Algorithmus für w_l %%%%%%%%%%%%%%%%%%%%%%%%%%%%%%%%%%%%%%%%
%%%%%%%%%%%%%%%%%%%%%%%%%%%%%%%%%%%%%%%%%%%%%%%%%%%%%%%%%%%%%%%%
To determine the rational expression using $a_\ell$ and $j$ we thus use the following

\begin{algorithm}[Determining the rational expression for $f(\tau)$ using $a_\ell$]
\label{algo:algo_ausdruck_wl}
\begin{algorithmic}[1]
    \REQUIRE{$\ell, n, f(\tau)$}
    \ENSURE{$Q(X, Y)=\sum_i \sum_k q_{i, k}X^iY^k$ from equation \eqref{eq:ausdruck_A}}

	\STATE Set $Q:=0$, $\prec(\ell, n):=-\ord(f)+(\ell+1)v$.\label{algo:algo_wl_prec}\\
	\STATE Compute $j(q)$, $a_\ell(q)$ up to precision $\prec(\ell, n)$ using formulae \eqref{eq:eta_def}, \eqref{eq:A_def} and \eqref{eq:Tr_lr}.\\
	\STATE Compute $A_\ell(X, Y)$ using algorithm 5.26 from \cite{Mueller}.\\
	\STATE Compute $s_1:=f(\tau)\frac{\partial}{\partial Y}A_\ell(a_\ell, j)$ and $s_2:=f(\tau)\frac{\partial}{\partial Y}A_\ell(a_\ell, j(q^\ell))$ up to precision $\prec(\ell, n)$.\\
	\STATE Set $p_1:=\ord(s_1)$, $p_2:=\ord(s_2)$.\\
	\WHILE{$s_1\neq 0$ \label{algo:algo_wl_schleife}}
		\STATE $o_1:=p_1, o_2:=p_2$\\
		\WHILE{$p_1<o_1+\ell-1$\label{algo:algo_wl_schleife_1}}
			\STATE Determine $(i_1, k_1), (i_2, k_2)$ satisfying $\ord(a_\ell^{i_s}j^{k_s})=p_1$, $s=1, 2$, with $0\leq k_1<v$ and $k_2=k_1+v$.\label{algo:algo_wl_ord_1}\\
			\STATE Compute $s_1:=s_1-q_{i_2, k_2}a_\ell^{i_2}j^{k_2}$.\label{algo:algo_wl_koeff_1}\\
			\STATE Compute $s_1:=s_1-\frac{\lc(s_1)}{\lc(a_\ell^{i_1}j^{k_1})}a_\ell^{i_1}j^{k_1}$. Set $Q:=Q+\frac{\lc(s_1)}{\lc(a_\ell^{i_1}j^{k_1})}X^{i_1}Y^{k_1}$.\\
			\STATE $p_1:=p_1+1$\\	
		\ENDWHILE
		\WHILE{$p_2<o_2+\ell-1$\label{algo:algo_wl_schleife_2}}
			\STATE Determine $(i_1, k_1), (i_2, k_2)$ satisfying $\ord(a_\ell^{i_s}j(q^\ell)^{k_s})=p_2$, $s=1, 2$, with $0\leq k_1<v$ and $k_2=k_1+v$.\label{algo:algo_wl_ord_2}\\
			\STATE Compute $s_2:=s_2-q_{i_1, k_1}a_\ell^{i_1}j(q^\ell)^{k_1}$. \label{algo:algo_wl_koeff_2}\\
			\STATE Compute $s_2:=s_2-\frac{\lc(s_2)}{\lc(a_\ell^{i_2}j(q^\ell)^{k_2})}a_\ell^{i_2}j(q^\ell)^{k_2}$. Set $Q:=Q+\frac{\lc(s_2)}{\lc(a_\ell^{i_2}j(q^\ell)^{k_2})}X^{i_2}Y^{k_2}$.\\
			$p_2:=p_2+1$\\
		\ENDWHILE
	\ENDWHILE
	\RETURN $Q$.
\end{algorithmic}
\end{algorithm}

The idea of the algorithm consists in considering the equations $s_1=Q(a_\ell, j)$ as well as 
$s_2=s_1^\ast=Q(a_\ell, j^\ast)$ in turns. The restrictions satisfied by the orders of the Laurent series allow to compute sets of $\ell-1$ coefficients at a time using successive elimination, since for one of the two summands of equal order the coefficient has either already been computed or vanishes. This is shown in detail in

\begin{lemma}\label{lem:algo_wl_korrekt}
Algorithm \ref{algo:algo_ausdruck_wl} works correctly.
\end{lemma}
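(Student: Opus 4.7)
The plan is to establish a loop invariant for the outer \textbf{while} loop and then to check termination; the argument will split naturally into a combinatorial observation about orders of monomials, an inductive step for the inner loops, and a final precision/termination check.

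The first step I would carry out is the pairing of orders. Two distinct monomials $a_\ell^{i_1}j^{k_1}$ and $a_\ell^{i_2}j^{k_2}$ with $0\le k_s<2v$ share an order in $Q(a_\ell, j)$ precisely when $(i_1-i_2)v=k_2-k_1$; combined with $|k_2-k_1|<2v$ this forces pairs of the shape $\{(i,k),(i-1,k+v)\}$ with $0\le k<v$. The analogous computation for $Q(a_\ell, j(q^\ell))$, using $\ord(j(q^\ell))=-\ell$ and $\gcd(v,\ell)=1$, yields pairs $\{(i,k),(i-\ell,k+v)\}$. In each case exactly one member of the pair has $k<v$, which matches the division of labour between the two inner loops: the $s_1$-loop extracts coefficients with $k<v$, the $s_2$-loop those with $k\ge v$.

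Next I would formulate the loop invariant stating that at the top of each outer iteration every $q_{i,k}$ whose monomial in $s_1$ (resp.\ $s_2$) has order strictly less than $p_1$ (resp.\ $p_2$) has been correctly computed and its contribution already subtracted from $s_1$ and $s_2$. The inductive step for a single $s_1$-order $p_1=-i_1 v-k_1$ with $k_1<v$ is then immediate: the companion index $(i_2,k_2)=(i_1-1,k_1+v)$ has $k_2\ge v$, and $q_{i_2,k_2}$ is either known from the preceding $s_2$-block or vanishes because the support of $Q$ is bounded via Lemma \ref{lem:schranke_abl_wl}; after its subtraction the quotient $\lc(s_1)/\lc(a_\ell^{i_1}j^{k_1})$ equals $q_{i_1,k_1}$. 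The $s_2$-step is dual, computing $q_{i_2,k_2}$ from a $q_{i_1,k_1}$ provided by the preceding $s_1$-block.

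The hardest part will be justifying the block size $\ell-1$ chosen in the two inner loops. Concretely, I must check that $\ell-1$ successive increments of $p_1$ in the $s_1$-block produce precisely the set of coefficients $\{q_{i,k}:k<v\}$ that the following $s_2$-block needs to advance its own $\ell-1$ steps, and vice versa, so that no $q_{i_2,k_2}$ is ever read before being written. This reduces to matching the two order parametrisations $-iv-k$ and $-iv-k\ell$ on the support region carved out by Lemma \ref{lem:schranke_abl_wl} and observing that an increment of $\ell-1$ in $p_1$ sweeps one full diagonal of the support. Termination is then automatic: once every nonzero coefficient of the finite polynomial $Q$ has been recorded, $s_1$ vanishes up to the precision $\prec(\ell, n)=-\ord(f)+(\ell+1)v$ set in line \ref{algo:algo_wl_prec}, which by Lemma \ref{lem:schranke_abl_wl} is large enough to guarantee that $s_1$ and $s_2$ carry the full information about $Q$.
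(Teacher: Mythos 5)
Your overall strategy matches the paper's: pair up monomials of equal order (the $s_1$-side pairs $\{(i,k),(i-1,k+v)\}$, $0\le k<v$), let the first inner loop produce the coefficients with $k<v$ and the second those with $k\ge v$, and bound the needed precision by $-\ord(f)+(\ell+1)v$ via Lemma \ref{lem:schranke_abl_wl} and the fact (Proposition \ref{prop:darst_a_l}) that $Q$ involves only nonnegative powers of $a_\ell$, so all monomial orders lie in $[\ord(f)-(\ell+1)v+1,\,0]$. However, there is a genuine gap: the heart of the correctness proof is exactly the point you first call ``immediate'' and then defer as ``the hardest part'', namely that whenever the algorithm subtracts $q_{i_2,k_2}a_\ell^{i_2}j^{k_2}$ in the $s_1$-loop (resp.\ $q_{i_1,k_1}a_\ell^{i_1}j(q^\ell)^{k_1}$ in the $s_2$-loop), that coefficient has already been written or provably vanishes. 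You reduce this to the slogan that an increment of $\ell-1$ in $p_1$ ``sweeps one full diagonal of the support'', but you never verify it, and the slogan as stated is not the right picture: it is \emph{not} true that each $s_1$-block produces precisely what the \emph{following} $s_2$-block needs and vice versa. The actual dependency is asymmetric and spread over many blocks: the monomial $a_\ell^{i_2}j^{k_2}$ treated at order $p_1$ in the $s_1$-scale sits at order $p_1-(\ell-1)k_2$ in the $s_2$-scale, so the needed $q_{i_2,k_2}$ may come from a much earlier outer iteration, while the $q_{i_1,k_1}$ needed in the $s_2$-loop comes from at latest the \emph{current} outer iteration's $s_1$-loop --- which is precisely why the order of the two inner loops matters and must enter the proof.

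The paper closes this gap by an explicit order bookkeeping that your proposal lacks: after $t$ outer iterations one has $p_1\ge o_1(t):=\ord(f)-(\ell+1)v+1+(\ell-1)t$ and $p_2\ge o_2(t):=\ord(f)-(2v-1)\ell+(\ell-1)t$ (the second bound uses part (2) of Lemma \ref{lem:schranke_abl_wl}, which your argument never invokes), together with the offset $o_1(t)=o_2(t)+(\ell-1)(v-1)$. Translating $o_1(t)\le p_1<o_1(t)+\ell-1$ by $-(\ell-1)k_2$ and using $k_1\ge 0$ shows the partner's $s_2$-order lies below $o_2(t)$, hence was already processed (or lies below $o_2(0)$, forcing the coefficient to vanish); dually, translating by $+(\ell-1)k_1$ and using $k_1\le v-1$ shows the partner's $s_1$-order is reached no later than the current $s_1$-block. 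Without these inequalities --- or an equivalent quantitative argument --- your loop invariant is asserted rather than proved, so the proposal as it stands does not establish correctness. (Two smaller points: your $s_2$-side pairing $\{(i,k),(i-\ell,k+v)\}$ presumes $\ell\nmid v$, which you should justify, and the paper's argument works directly with order comparisons rather than with the support pairing of $Q(a_\ell,j(q^\ell))$, so that assumption is not needed there.)
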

\begin{proof}
To prove this we have to show the following statements:
\begin{enumerate}
\item The specified precision suffices to find the polynomial $Q$.
\item The coefficients $q_{i_2, k_2}$ and $q_{i_1, k_1}$ used in steps \ref{algo:algo_wl_koeff_1} and \ref{algo:algo_wl_koeff_2}, respectively, have already been computed unless they vanish.
\end{enumerate}
Concerning the first point we remark again that according to proposition \ref{prop:darst_a_l} $Q(X, Y)$ contains only positive powers of $a_\ell$. Hence, the order $o$ of its summands satisfies
\begin{equation}\label{eq:prec_algo_ausdruck}
\ord\left(f(\tau)\frac{\partial}{\partial Y}A_\ell(a_\ell, j) \right)=\ord(f)-(\ell+1)v+1 \leq o \leq 0,
 \end{equation}
where the equality follows from lemma \ref{lem:schranke_abl_wl}. Since the value $p_1$ ranging over the order of $s_1$ strictly increases in each iteration of loop \ref{algo:algo_wl_schleife_1}, the specified precision
\[ 
\prec(\ell, n)=-\ord(f)+(\ell+1)v %% +1 für Wert 0
 \]
is sufficient.\\
Concerning the second issue we observe that according to lemma \ref{lem:schranke_abl_wl} after $t$ iterations of loop \ref{algo:algo_wl_schleife}
\[ 
o_1\geq o_1(t)
:=\ord(f)-(\ell+1)v+1+(\ell-1)t, \quad o_2\geq o_2(t):=\ord(f)-(2v-1)\ell+(\ell-1)t
 \]
holds. Furthermore, one calculates $o_1(t)=o_2(t)+(\ell-1)(v-1)$. Now assume that in step \ref{algo:algo_wl_ord_1} 
$o_1(t)\leq p_1=-i_2v-k_2<o_1(t)+\ell-1$ holds. Using $k_2=k_1+v$, this yields
\begin{align*}
& o_1(t)-(\ell-1)k_2\leq -i_2v-k_2\ell<o_1(t)-(\ell-1)(k_2-1)\\
 \overset{k_2=k_1+v}{\Rightarrow}
% o_1(t)-(\ell-1)(v-1)-(\ell-1)(k_1+1)\leq -i_2v-k_2<o_1(t)-(\ell-1)(v-1)-(\ell-1)(k_1)
& o_2(t)-(\ell-1)(k_1+1)\leq -i_2v-k_2\ell<o_2(t)-(\ell-1)k_1.
\end{align*}
Now $k_1\geq 0$ implies $-i_2v-k_2\ell=p_2$ has already held in a preceding iteration unless the coefficient vanishes a priori (if $o_2(t)-(\ell-1)k_1\leq o_2(0)$ holds). Thus, the coefficient $q_{i_2, k_2}$ is already known.\\
In the same vein assume that $o_2(t)\leq p_2=-i_1v-k_1\ell<o_2(t)+\ell-1$ holds in step \ref{algo:algo_wl_ord_2}. It follows
\begin{align*}
& o_2(t)+(\ell-1)k_1\leq -i_1v-k_1<o_2(t)+(\ell-1)(k_1+1)\\
%o_2(t)+(\ell-1)(v-1)+(\ell-1)(k_1+1-v) \leq -i_1v-k_1  < o_2(t)+(\ell-1)(v-1)+(\ell-1)(k_1+2-v)
\Rightarrow\quad
& o_1(t)+(\ell-1)(k_1+1-v)\leq -i_1v-k_1 < o_1(t)+(\ell-1)(k_1+2-v).
 \end{align*}
Then $k_1\leq v-1$ implies $k_1+2-v\leq 1$ and thus $-i_1v-k_1=p_1$ has to hold at the latest in the $t+1$-th iteration. Since loop \ref{algo:algo_wl_schleife_1} containing the calculations for $s_1$ is performed before loop \ref{algo:algo_wl_schleife_2} the coefficient $q_{i_1, k_1}$ is already known unless it vanishes.
\end{proof}

To compute the rational expression using $a_\ell$ we apply algorithm \ref{algo:algo_ausdruck_wl} to the functions $\tau_{\ell, n}^{(1)}, \tau_{\ell, n}^{(2)}$. Using lemma 2.23 from \cite{ELGS} and equation \eqref{eq:x_def} an easy calculation shows
\begin{equation}\label{eq:wl_x}
w_\ell(x(\zeta_\ell^t, q))=(\ell\tau)^2x(q^t, q^\ell)
\end{equation}
for $1\leq t\leq \ell-1$, an analogue statement holds for $y(\zeta_\ell^t, q)$. Hence, $\ord(x^*(\zeta_\ell^t, q))\geq 1$, $\ord(y^*(\zeta_\ell, q))\geq 1$, which implies
\begin{equation}\label{eq:ord_gs_wl_zerlegt}
\ord\left(\tau_{\ell, n}^{(1)}\right)\geq n-\ell e_{\Delta}, \quad \ord\left(\tau_{\ell, n}^{(2)}\right)\geq n-\ell e_{\Delta}-\ord(m_\ell).
 \end{equation}

\begin{koro}\label{koro:laufzeit_ausdruck_Al}
Algorithm \ref{algo:algo_ausdruck_wl} computes $Q$ using $\tilde{O}((v+e_\Delta)^2\ell^2)$ multiplications in $\Z$.
\end{koro}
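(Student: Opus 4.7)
The plan is to bound the cost of each phase of Algorithm \ref{algo:algo_ausdruck_wl} separately and add the contributions. First I would verify that the precision $\prec(\ell,n) = -\ord(f) + (\ell+1)v$ chosen in step \ref{algo:algo_wl_prec} is $O(\ell(v+e_\Delta))$. For $f \in \{\tau_{\ell,n}^{(1)}, \tau_{\ell,n}^{(2)}\}$ this follows from \eqref{eq:ord_gs_wl_zerlegt} together with $\ord(m_\ell) = -v$ and $e_\Delta = O(\ell)$. Computing $j(q)$ and $a_\ell(q)$ to this precision via \eqref{eq:eta_def}, \eqref{eq:A_def} and \eqref{eq:Tr_lr} as well as computing $A_\ell(X,Y)$ by Müller's algorithm are then negligible compared to what follows.

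Second, I would bound the support of the target polynomial $Q$. Lemma \ref{lem:schranke_abl_wl} gives $\ord\bigl(f \cdot \tfrac{\partial}{\partial Y}A_\ell(a_\ell,j)\bigr) \geq \ord(f) - (\ell+1)v + 1$, so each non-vanishing monomial $q_{i,k} a_\ell^i j^k$ of $Q(a_\ell,j)$ must satisfy $iv + k \leq (\ell+1)v - 1 - \ord(f)$. Combined with the constraint $0 \leq k < 2v$ from proposition \ref{prop:darst_a_l}, this yields at most two admissible pairs $(i,k)$ of any prescribed order, hence a total of $O(\prec(\ell,n)) = O(\ell(v+e_\Delta))$ non-vanishing coefficients.

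Third, I would account for the cost of forming $s_1$ and $s_2$. Precomputing the powers $a_\ell^i$ for $0 \leq i \leq O(\ell(1+e_\Delta/v))$ and $j^k$ for $0 \leq k < 2v$ takes $O(\ell(1+e_\Delta/v) + v)$ series multiplications of length $\prec(\ell,n)$, each of cost $\M(\prec(\ell,n))$. Evaluating $\partial_Y A_\ell(a_\ell,j)$ as a linear combination of the $O(\ell v)$ products $a_\ell^i j^k$ and multiplying by $f$ then fits in $\tilde{O}(\ell v \cdot \prec(\ell,n)) = \tilde{O}(\ell^2 v(v+e_\Delta))$, which is absorbed in the target bound; the cost for $s_2$ is the same.

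Finally, each iteration of the inner \texttt{while} loops performs one subtraction of a scalar multiple of a precomputed series $a_\ell^i j^k$ from $s_1$ (or $a_\ell^i j(q^\ell)^k$ from $s_2$), each at cost $O(\prec(\ell,n))$. By lemma \ref{lem:algo_wl_korrekt} the total number of such iterations equals the number of non-vanishing coefficients of $Q$ and is therefore $O(\ell(v+e_\Delta))$, so the loop contributes $O(\ell(v+e_\Delta) \cdot \prec(\ell,n)) = O(\ell^2(v+e_\Delta)^2)$. Adding the two dominant contributions and using $v \leq v + e_\Delta$ yields the claimed $\tilde{O}((v+e_\Delta)^2 \ell^2)$. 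The main obstacle I anticipate is making precise the degree bound $\deg_X Q = O(\ell(1+e_\Delta/v))$ and organising the bookkeeping of the precomputed table of products $a_\ell^i j^k$ so as not to inflate the overall cost by an additional factor of $v$; a naive on-the-fly recomputation would push the loop cost to $\tilde{O}(\ell^2(v+e_\Delta)^2 v)$.
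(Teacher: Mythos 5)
Your proposal is correct and follows essentially the same route as the paper: bound the precision by $\prec(\ell,n)=O(\ell(v+e_\Delta))$ via \eqref{eq:ord_gs_wl_zerlegt} and step \ref{algo:algo_wl_prec}, bound the number of loop iterations by $O(\prec(\ell,n))$, and charge each iteration $\tilde{O}(\prec(\ell,n))$ coefficient multiplications using stored/precomputed series (the paper phrases this as a constant number of Laurent-series multiplications per iteration ``provided some intermediate results are stored''), with the precomputation of $s_1,s_2$ and the monomial table absorbed into the target bound exactly as you argue. The only slip is the statement $\ord(m_\ell)=-v$: the series $m_\ell(q)$ has \emph{positive} order $\frac{\ell-1}{\gcd(12,\ell-1)}$ (it is $a_\ell$ whose order is $-v$), but since this quantity is $O(\ell)$ either way it does not affect the asymptotic estimate.
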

\begin{proof}
In each iteration of loop \ref{algo:algo_wl_schleife_1} the order of $s_1$ strictly decreases by the construction of the algorithm, which implies the loop is called at most $\prec(\ell, n)$ times. Furthermore, each call of loops \ref{algo:algo_wl_schleife_1} and \ref{algo:algo_wl_schleife_2} in turn requires a constant number of multiplications of Laurent series provided some intermediate results are stored. We remark that, due to the values assumed by $i$ and $k$ in loop \ref{algo:algo_wl_schleife_2}, this has to be done in a clever way in order to obtain an efficient implementation. The Laurent series to be multiplied are computed up to precision $\prec(\ell, n)$. Using \eqref{eq:ord_gs_wl_zerlegt} and step \ref{algo:algo_wl_prec} of the algorithm to deduce
\begin{equation}\label{eq:prec}
\prec(\ell, n)=\ell e_\Delta-n+\ord(m_\ell)+(\ell+1)v=O(\ell(e_\Delta+v))
 \end{equation}
and taking into account that intermediate results are multiplied by the factor from corollary \ref{koro:gs_nenner} the assertion follows.
\end{proof}

Equation \eqref{eq:prec} shows the precision required for finding the rational expression and thus the run-time of all partial computations depend on the value of $v$. Apart from the functions  $m_\ell$ and in particular $a_\ell$ one might conceive using further alternatives. One approach to find such functions which is due to an idea of Atkin may be found in \cite[pp.~262--265]{Morain}. However, this procedure does not seem to have been much used in former computations, since it is relatively complicated and does not easily lend itself to the construction of a general algorithm. We remark that for any function $f_\ell$ one might use as an alternative the results from \cite{Abramovich} imply the lower bound
\begin{equation}\label{eq:untere_schranke_v}
\left|\ord(f_\ell)\right|=:v\geq \frac{7}{800}\ell
 \end{equation}
for the best possible values.

% !TeX spellcheck = en_GB
\section{Point-counting in the Elkies case}\label{sec:abschnitt_elkies}
%%% Allgemeines, Idee von Mihailescu, Verwendung der vorberechneten Werte durch Spezialisierung auf konkrete
%%% Kurve
\subsection{Gau{\ss} sums}\label{sec:elkies_gs}
In this section we give some details on how the representation
\begin{equation}\label{eq:gs_allgemein}
\frac{G_{\ell, n, \chi}(q)^n p_1(q)^r}{\Delta(q)^{e_{\Delta}}}=R_1(a_\ell(q), j(q))+\frac{R_2(a_\ell(q), j(q))}{g(q)}
 \end{equation}
precomputed by means of algorithms \ref{algo:algo_zerlegung_wl} and \ref{algo:algo_ausdruck_wl} may be used for counting points on an elliptic curve $E: Y^2=X^3+aX+b$ over a finite field $\F_p$ having $j$-invariant different from $0$ and $1728$.\\

As shown in detail in \cite{ELGS}, for an Elkies prime $\ell$ equation \eqref{eq:gs_allgemein} translates to the formula
\begin{equation}\label{eq:gs_konkret}
\frac{G_{\ell, n, \chi}(E)^n p_1(E)^r}{\Delta(E)^{e_{\Delta}}}=R_1(a_\ell(E), j(E))+\frac{R_2(a_\ell(E), j(E))}{g(E)}
\end{equation}
in terms of values associated to $E$. Here $G_{\ell, n, \chi}(E)$ is the elliptic Gau{\ss} sum defined by equation \eqref{eq:ell_gs}, $j(E)$ and $\Delta(E)$ are the well-known $j$-invariant and discriminant of $E$, $p_1(E)$ may be computed using the formulae from \cite[pp.~269--271]{Morain} and $a_\ell(E)$ is found as a root of the polynomial $A_\ell(X, j(E))$. Computing these values one directly obtains $G_{\ell, n, \chi}(E)^n$, which yields the index modulo $n$ in $\F_\ell^*$ of the eigenvalue $\lambda$ of the Frobenius homomorphism $\phi_p$ using equation \eqref{eq:lambda_aus_ell_gs} provided $p \equiv 1 \mod n$ holds. It is obvious that having determined this value modulo all maximal prime power divisors $n$ of $\ell-1$ we obtain $\lambda$ by using the Chinese Remainder Theorem, from which we glean $t \mod \ell$ from equation \eqref{eq:char_gl} as needed in Schoof's algorithm. Proceeding in this way to use equation \eqref{eq:lambda_aus_ell_gs} we avoid passing through large extensions of $\F_p$, which would be necessary when computing $G_{\ell, n, \chi}(E)$ directly from its definition \eqref{eq:ell_gs}. The advantage stems from \eqref{eq:ell_gs_pot_eigenschaft} stating that $G_{\ell, n, \chi}(E)^n$ lies in a much smaller extension than $G_{\ell, n, \chi}(E)$.\\

Computing the roots of the polynomial $A_\ell(X, j(E))$ in $\F_p$ yields two possible values for $a_\ell(E)$ corresponding to the two eigenvalues $\lambda$ and $\mu$ of $\phi_p$ from section \ref{sec:ell_kurven}. As already remarked below algorithm \ref{algo:algo_zerlegung_wl}, in equation \eqref{eq:gs_konkret} we choose $g=m_\ell^*-m_\ell$ and precompute $g^2$ as a rational expression in terms of $a_\ell$ and $j$ by means of algorithm \ref{algo:algo_ausdruck_wl}. This again translates to a formula on $E$ for $g(E)^2$,  which yields two possible values for $g(E)$ after a root  extraction.

Since $m_\ell^*=\frac{\ell^s}{m_\ell}-m_\ell$ for $s$ as in \eqref{eq:ml_def} holds, the correct one among the two candidates $\pm g(E)$ can be determined by solving the equations
\[ 
\frac{\ell^s}{x}-x=\pm g(E)
 \]
and checking for all solutions $x$ whether they are roots of $M_\ell(X, j(E))$. This yields the value of $m_\ell(E)$ corresponding to $g(E)$ at the same time. It would also be conceivable to determine $m_\ell(E)$ as a root of $M_\ell(X, j(E))$ and then to directly compute the value $g(E)$. However, the root-finding step turns out to have a significantly higher run-time than the approach just presented.\\
Once $a_\ell(E)$ or $m_\ell(E)$ are computed, the value $p_1(E)$ is determined using the formulae from \cite[pp.~102--106]{Mueller} or \cite[pp.~269--271]{Morain}, respectively. Since the first approach again requires finding the roots of some polynomial we prefer the second  variant on grounds of performance.\\
We remark that the value
$\frac{\partial A_\ell}{\partial Y}(a_\ell(E), j(E))$
used as the denominator of the different rational expressions computed by means of algorithm \ref{algo:algo_ausdruck_wl} may be zero in isolated cases, though this happens very rarely in practice. In this case we resort to the second possible value for $a_\ell(E)$. The same holds true for the values $p_1(E)$ and $g(E)=\frac{\ell^s}{m_\ell(E)}-m_\ell(E)$ (but \cite[p.~16]{ELGS} implies $m_\ell(E)\neq 0$).
%%% ACHTUNG: g=0 <=> m_ell=sqrt(l^s), was vorkommen kann, da m_ell alle Werte außer 0 annimmt.

%%% Nach langen Irrwegen schließlich sinnvoll berechenbar definierte Jacobisummen
\subsection{Jacobi sums}\label{sec:elkies_js}
\subsubsection{Theory}\label{sec:js_theorie}
As was remarked above, the approach using the (universal) elliptic Gau{\ss} sums only works if $p \equiv 1 \mod n$ holds. In order to be able to use formula \eqref{eq:lambda_aus_ell_gs} for arbitrary primes $p$, the Jacobi sums $\frac{G_{\ell, n, \chi}(E)^m}{G_{\ell, n, \chi^m}(E)}$ have to be determined as well. Directly using the function $\frac{G_{\ell, n, \chi}(q)^m}{G_{\ell, n, \chi^m}(q)}$ to construct a modular function of weight $0$ for $\Gamma_0(\ell)$ along the lines of \cite[Corollary 2.25]{ELGS} is not possible, since this function would not be holomorphic on $\mathbb{H}$. However, both \cite[Proposition 2.16]{ELGS} and proposition \ref{prop:darst_a_l} crucially rely on the holomorphicity of the functions $g$, $\tau_{\ell, n}$ for proving the existence of a rational expression of a special form well-suited for efficient computations. Hence, we slightly rearrange the key equation \eqref{eq:lambda_aus_ell_gs}. To this end, we remark that writing $m^{\prime}=n-m$ we obtain
\begin{equation}\label{eq:m_strich}
p=n(q+1)-m^{\prime},\quad m \equiv -m^{\prime} \mod n,
 \end{equation}
which yields the new equation
\begin{align}\label{eq:gleichung_konkret_3}
\chi^{-m}(\lambda)=\frac{(G_{\ell,n, \chi}(E)^n)^{q+1}}{G_{\ell,n, \chi}(E)^{m^{\prime}}
G_{\ell,n, \chi^{-m^{\prime}}}(E)}.
\end{align}
In the proof of corollary 2.24 of \cite{ELGS} it is shown that for $\gamma \in \Gamma_0(\ell)$
\[ 
G_{\ell, n, \chi}(q(\gamma\tau))^{k}=(c\tau+d)^{ek} \chi^{-k}(d)G_{\ell, n, \chi}(q)^k
 \]
holds, where $e=2$ for $n$ odd and $e=3$ for $n$ even holds. For this reason 
\[ 
G_{\ell, n, \chi}(q)^{k}G_{\ell,n, \chi^{-k}}(q)
 \]
is a modular function of weight $e(k+1)$ for $\Gamma_0(\ell)$. In particular, we obtain the following
\begin{lemma}\label{lem:js_def}
Let $\ell$ be a prime, $n \mid \ell-1$, $\chi: \F_\ell^* \rightarrow \mu_n$ be a character of order $n$ and let $k \in \N$. If $n$ is even, let $k$ be odd. Furthermore, let 
\[ 
r=\begin{cases}
\min\{r: \frac{k+1+r}{6} \in \N \},& n\equiv 1 \mod 2,\\
\min\{r: \frac{3(k+1)+2r}{12}\in \N \},& n\equiv 0 \mod 2
\end{cases}
\quad \text{and}\quad 
e_{\Delta}=\begin{cases}
\frac{k+1+r}{6}, & n\equiv 1 \mod 2,\\
\frac{3(k+1)+2r}{12}, & n \equiv 0 \mod 2.
\end{cases}
 \]
Then
\[ 
J_{\ell,n, \chi, k}(q)=J_{\ell, n, k}(q)=\frac{G_{\ell, n, \chi}(q)^{k}G_{\ell,n, \chi^{-k}}(q)p_1(q)^r}{\Delta(q)^{e_{\Delta}}}
 \]
is a modular function of weight $0$ for $\Gamma_0(\ell)$ which is holomorphic on $\mathbb{H}$ and whose coefficients lie in $\Q[\zeta_{n}]$. We call $J_{\ell, n, k}(q)$ a \textup{universal elliptic Jacobi sum}.
\end{lemma}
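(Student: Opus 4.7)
The plan is to verify in turn the transformation law, holomorphy on $\mathbb{H}$, and rationality of the $q$-coefficients, treating each as a small variation on the corresponding argument for $\tau_{\ell, n}(q)$ in theorem \ref{satz:gs_modulfunktion_2}. The crucial input for the transformation is corollary 2.24 of \cite{ELGS}, which states that for $\gamma = \left(\begin{smallmatrix} a & b \\ c & d \end{smallmatrix}\right) \in \Gamma_0(\ell)$ and any positive integer $j$,
\[ G_{\ell, n, \chi}(\gamma\tau)^{j} = (c\tau+d)^{ej}\,\chi^{-j}(d)\,G_{\ell, n, \chi}(\tau)^{j}, \]
with $e=2$ for $n$ odd and $e=3$ for $n$ even. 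Applying this once to $\chi$ with exponent $k$ and once to $\chi^{-k}$ with exponent $1$, the character factors $\chi^{-k}(d)$ and $\chi^{k}(d)$ cancel and the combined weight is $e(k+1)$. The condition that $k$ be odd when $n$ is even is exactly what forces $\chi^{-k}$ to have even order as well, so that the convention $V=y$ applies uniformly to both $G_{\ell, n, \chi}$ and $G_{\ell, n, \chi^{-k}}$. Adding the contribution $2r$ from $p_{1}^{r}$ and $-12\,e_{\Delta}$ from $\Delta^{-e_{\Delta}}$, the total weight of $J_{\ell, n, k}(q)$ is $e(k+1)+2r-12 e_{\Delta}$, which vanishes in both parity cases upon substituting the definitions of $r$ and $e_{\Delta}$.

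Holomorphy on $\mathbb{H}$ is then immediate: the summands $x(\zeta_{\ell}^{\lambda}, q)$ and $y(\zeta_{\ell}^{\lambda}, q)$ from \eqref{eq:x_def}--\eqref{eq:y_def} define holomorphic functions on $\mathbb{H}$, so do the finite sums $G_{\ell, n, \chi}$, $G_{\ell, n, \chi^{-k}}$ and $p_{1}$, and $\Delta = \eta^{24}$ has no zero on $\mathbb{H}$. The integrality of $r$ and $e_{\Delta}$ guarantees that the construction yields a single-valued holomorphic function rather than a branch of a multi-valued one. Meromorphy at the cusps $\infty$ and $0$ of $\Gamma_{0}(\ell)$ follows because the numerator is a product of Laurent series with finitely many negative powers of $q$, the same holds at the cusp $0$ after transport by $w_{\ell}$, and $\Delta$ contributes at most a pole of order $e_\Delta$ at each cusp.

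For the rationality of the $q$-coefficients I would repeat the Galois-theoretic argument used in lemma \ref{lem:produkt_in_kleinerem_koerper}. The group $\mathrm{Gal}(\Q[\zeta_{\ell}, \zeta_{n}]/\Q[\zeta_{n}])$ is generated by $\sigma:\zeta_{\ell} \mapsto \zeta_{\ell}^{c}$ with $c$ a primitive root modulo $\ell$, and the identity recalled there yields $\sigma(G_{\ell, n, \chi}) = \chi^{-1}(c)\,G_{\ell, n, \chi}$. Consequently
\[ \sigma\bigl(G_{\ell, n, \chi}^{k}\, G_{\ell, n, \chi^{-k}}\bigr) = \chi^{-k}(c)\,\chi^{k}(c)\,G_{\ell, n, \chi}^{k}\, G_{\ell, n, \chi^{-k}} = G_{\ell, n, \chi}^{k}\, G_{\ell, n, \chi^{-k}}, \]
and since $p_{1}$ and $\Delta$ already have rational $q$-expansions, the whole quotient has coefficients in $\Q[\zeta_{n}]$. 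I do not expect any single step to pose a serious obstacle; the only mild bookkeeping lies in verifying that the prescribed minimal $r$ in each parity case genuinely makes $e_{\Delta}$ an integer, which reduces to solving the congruences $k+1+r \equiv 0 \pmod{6}$, respectively $3(k+1)+2r \equiv 0 \pmod{12}$ with $k$ odd.
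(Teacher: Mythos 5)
Your proposal is correct and follows essentially the same route as the paper, which simply invokes the transformation law from corollary 2.24 of \cite{ELGS} (giving the product weight $e(k+1)$ with the character factors cancelling), balances the weight with $p_1^r$ and $\Delta^{-e_\Delta}$, and inherits holomorphy and the Galois-invariance argument for coefficients in $\Q[\zeta_n]$ from the Gau{\ss}-sum case. Note only that the paper's stated purpose of requiring $k$ odd for even $n$ is precisely the solvability of $3(k+1)+2r\equiv 0 \pmod{12}$, i.e.\ the existence of a suitable $r$, which you do verify in your closing remark.
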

\begin{proof}
Using the above considerations the proof proceeds in exactly the same way as the one for corollary 2.24 of \cite{ELGS}. For even $n$ the condition on $k$ guarantees the existence of a suitable value for $r$.
\end{proof}

From our theory it follows that $J_{\ell,n, k}(q)$ admits a representation as a rational expression $R_k$ in terms of $j(q)$ as well as $m_\ell(q)$ and $a_\ell(q)$, respectively. This expression can be determined using the algorithms presented in section \ref{sec:implementierung} and \ref{sec:erg} for computing the universal elliptic Gau{\ss} sums. It is evident from equation \eqref{eq:wl_x} that $\ord(J_{\ell, n, k}^*)\geq (k+1)-\ell e_{\Delta}$ holds. Hence, we obtain
\begin{equation}\label{eq:ord_js}
\ord\left(J_{\ell, n, k}^{(1)}\right)\geq (k+1)-\ell e_{\Delta}, \quad 
\ord\left(J_{\ell, n, k}^{(2)}\right)\geq (k+1)-\ell e_{\Delta}-\ord(m_\ell)
\end{equation}
when applying algorithm \ref{algo:algo_ausdruck_wl}.\\

Having computed the rational expression $R_k$, one can determine the value
\[ 
J_{\ell, n, \chi, k}(E)=G_{\ell, n, \chi}(E)^{k}G_{\ell,n, \chi^{-k}}(E)
 \] 
in the same vein as in the previous section \ref{sec:elkies_gs}. In order to determine the index of $\lambda$ in $(\Z/\ell\Z)^*$ modulo $n$, we thus proceed as follows:

\begin{algorithm}[Determining the index of $\lambda$ modulo $n$]
\label{algo:algo_index_lambda}
\begin{algorithmic}[1]
    \REQUIRE{$\ell, n, E$}
    \ENSURE{Index of $\lambda$ in $(\Z/\ell\Z)^*$ modulo $n$}

	\STATE Determine $G_{\ell,n, \chi}(E)^n$ using equation \eqref{eq:gs_konkret}.\\
	\STATE Determine $J_{\ell, n, \chi, m^{\prime}}(E)$, where $m^{\prime}$ is as in equation \eqref{eq:m_strich}.\\
	\STATE Determine the index of $\lambda$ using equation \eqref{eq:gleichung_konkret_3}.\\
\end{algorithmic}
\end{algorithm}

We remark that in the representation $p=nq+m$ we obviously have $(m, n)=1$. In particular,  $m$ and $m^{\prime}$ are odd if $n$ is even, which implies the second step is only performed for values of $m^{\prime}$ covered by lemma \ref{lem:js_def}.\\

Applying this method we can use equation \eqref{eq:lambda_aus_ell_gs} for all maximal prime power divisors $n$ of $\ell-1$, which yields the value of $\lambda$ by means of the Chinese Remainder Theorem and thus $t$ modulo $\ell$. Having computed this value for sufficiently many primes $\ell$, we can determine the value of $t$ and finally that of $\#E(\F_p)$ using again the CRT as in Schoof's algorithm.

\subsubsection{Implementation}\label{sec:js_impl}
When implementing the computation of the Laurent series of the Jacobi sums and the determination of the rational expression we again avail ourselves of the ideas exposed in section \ref{sec:implementierung}. In particular, the expressions $G_{\ell, n, \chi^k}$ are multiplied by suitable cyclotomic Gau{\ss} sums before their product is computed. In this way all multiplications can again be performed in $\Q[\zeta_n]$ instead of in $\Q[\zeta_\ell, \zeta_n]$, which accounts for a significant run-time reduction.\\
For fixed $\ell$, $n$ all required Jacobi sums $J_{\ell, n, \chi, k}(q)$ are computed successively. As mentioned, these only have to be computed for $k$ coprime to $n$. Furthermore, equation \eqref{eq:lambda_aus_ell_gs} directly implies that for $m=1$, i.~e. $m^{\prime}=n-1$, no Jacobi sum is needed. Hence, our computation is as follows:

\begin{algorithm}[Computing the Jacobi sums corresponding to $\ell$ and $n$]
\label{algo:algo_js_lr}
\begin{algorithmic}[1] 
    \REQUIRE{$\ell, n, \prec(\ell, n)$}
    \ENSURE{Jacobi sums $J_{\ell, n, \chi, k}$ for $1\leq k \leq n-2$ and $(k, n)=1$}

	\STATE Determine $T=T_1:=G_{\ell, n, \chi}(q)G_{\chi^{-1}}(\zeta_\ell)$, $S=S_1:=G_{\chi^{-1}}(\zeta_\ell)$ up to precision $\prec(\ell, n)$.\\
	\FOR {$k=1$ \TO $n-2$}
		\STATE If $(k, n)>1$, go to step \ref{algo:js_algo_aktualisieren}.\\
		\STATE Determine $T_2:=G_{\ell, n, \chi^{-k}}(q)G_{\chi^k}(\zeta_\ell)$, $S_2:=G_{\chi^k}(\zeta_\ell)$. \label{algo:js_algo_wert_k}\\
		\STATE $T_3:=TT_2$, $S_3:=SS_2$. \label{algo:js_algo_mul}\\
		\STATE $T_4:=T_3S_3^{-1}$.\\
		\STATE Compute $J_{\ell,n, \chi, k}(q)$ by multiplication of $T_4$ by suitable powers of $p_1(q)$ and $\Delta(q)$.\\
		\STATE $T:=TT_1$, $S:=SS_1$.\label{algo:js_algo_aktualisieren}\\
	\ENDFOR
\end{algorithmic}
\end{algorithm}

Step \ref{algo:js_algo_aktualisieren} obviously guarantees that $T=T_1^k$ as well as $S=S_1^k$  hold in each iteration in step \ref{algo:js_algo_mul}, which proves the correctness. Let $c$ be a generator of $(\Z/n\Z)^*$. Since $(\ell, n)=1$ holds, we obtain
$\gal(\Q[\zeta_\ell, \zeta_n]/\Q[\zeta_\ell])=\langle \sigma: \zeta_n \mapsto \zeta_n^c\rangle$. As $\sigma$ is a homomorphism, it follows
\[ 
\sigma(G_{\ell, n, \chi}(q))=G_{\ell, n, \chi^c}(q)\quad \text{and}\quad \sigma(G_{\chi^{-1}}(\zeta_\ell))=G_{\chi^{-c}}(\zeta_\ell).
 \]
Thus, the expression in step \ref{algo:js_algo_wert_k} can be recovered from the precomputed values $T_1$, $S_1$ with negligible costs by applying the homomorphism $\zeta_n \mapsto \zeta_n^k$.\\

The most costly step inside the loop is the computation of $T_3$ and the updating of $T_1$. Each of these requires $O(\M(n\prec(\ell, n)))$ operations. Using equations \eqref{eq:prec_algo_ausdruck} and \eqref{eq:ord_js} we see that as in \eqref{eq:prec} it suffices to take
\begin{equation}\label{eq:prec_js}
\prec(\ell, n)=\ell(e_\Delta+v+1)
 \end{equation}
%%-ord(f*dA_\ell/dY(.,.)+(\ell+1)v\leq -(k+1)+\ell e_\Delta+(\ell+1)v+\ord(m_\ell)
%%=\ell(e_\Delta+v)+v-(k+1)+\ord(m_\ell)\leq \ell(e_\Delta+v)+v-2+\ord(m_\ell)\leq \ell(e_\Delta+v+1)
when afterwards applying algorithm \ref{algo:algo_ausdruck_wl} to compute the rational expression in terms of $a_\ell$ and $j$.

%%%% Laufzeit des vorgestellten Ansatzes
\subsection{Run-time and memory requirements}\label{sec:laufzeit_elkies}
We compute $G_{\ell, n, \chi}(E)^n$ using equation \eqref{eq:gs_konkret}. Once the values of $j, a_\ell, \Delta$, $p_1$ and $g$ on $E$ have been determined as detailed in section \ref{sec:elkies_gs}, the evaluation of the right hand side of this equation requires a further  $\prec(\ell, n)$ operations to determine $R_i(a_\ell, j)$, $i=1,2$. As shown in equation \eqref{eq:prec}, the value $\prec(\ell, n)$, which provides a bound on the degree of enumerator and denominator of $R$ in terms of $j$ and $a_\ell$, can be chosen to be
$\prec(\ell, n)=(v+e_\Delta+1)\ell.$ Since the expression $R(a_\ell, j)$ lies in $\F_p[\zeta_n]$, the cost for computing $G_{\ell, n, \chi}(E)^n$ using \eqref{eq:gs_konkret} amounts to $O((v+e_\Delta+1)\ell\M(n))$ multiplications in $\F_p$. The computation of $J_{\ell, n, \chi, m^{'}}(E)$ requires comparable costs according to section \ref{sec:js_impl}. It is easy to see that these costs dominate the work for precomputing the values $j, m_\ell, \Delta, p_1, g$. Subsequently, according to equation \eqref{eq:gleichung_konkret_3} the essential work consists in determining the power $(G_{\ell, n, \chi}(E)^n)^{q+1}$, which requires $O(\M(n)\log q)=O(\M(n)\log p)$ operations, since $n \ll p$ holds. Hence, the total run-time amounts to
\begin{equation}\label{eq:laufzeit_neu}
 O(\M(n)((v+e_{\Delta}+1)\ell+\log p)).
\end{equation}
We compare this to the algorithm presented in \cite{MiMoSc}, whose run-time is given by
\[ 
O\left(\Cpoly(\ell)\log \frac{\ell}{n} + \M(n)\log p + \Cpoly_{\sqrt{n}}(n)\right)
 \]
using the notation from that article, which was one of the starting points of our research. Here
\begin{equation}\label{eq:laufzeit_mms}
O\left(\Cpoly(\ell)\log \frac{\ell}{n} +\Cpoly_{\sqrt{n}}(n)\right) =\tilde{O}\left(\ell^{\frac{\omega+1}{2}}+n^{\frac{3\omega+1}{4}}\right)
 \end{equation}
holds, where $2\leq \omega<3$ is an exponent for matrix multiplication in $\F_p$. The theoretical record is $\omega \simeq 2.38$ from \cite{CoWi}. Comparing \eqref{eq:laufzeit_neu} and \eqref{eq:laufzeit_mms} we realise that our approach might be competitive provided $n$ and $v$ are comparatively small. Due to $e_\Delta \approx \frac{n}{6}$ one should at least require $n\leq\sqrt{\ell}$.\\
Recent results by Tenenbaum in \cite{Tenenbaum}, which build on the well-known asymptotic formulae for smooth numbers in \cite{deBruijn}, show
\[
\Upsilon(x, y)=\#\{z \leq x: p^k\ ||\ z \Rightarrow p^k\leq y\} \sim x\rho(u)\quad \text{for } x \rightarrow \infty,\quad \text{where }x=y^u,
\]
for the count of $y$-ultrafriable numbers $\leq x$. Here  $\rho(u)$ denotes the Dickmann function. For $u=2$ one gleans $\rho(u)\approx 0.307$. Thus, we can expect that for about 30\% of the primes $\ell$ we consider the values of $n$ are small enough that an improvement of the run-time might be possible.\\
However, since equation \eqref{eq:untere_schranke_v} implies $v=O(\ell)$ holds asymptotically, we find that our algorithm exhibits an asymptotic run-time of $O(\ell^2\M(n))$ and is not competitive with the one from \cite{MiMoSc}.\\

One might observe that the run-time for the computation of the $q$-th power of $G_{\ell, n}(E)^n$ can be reduced if the $n$-th cyclotomic polynomial is reducible over $\F_p$. In this case the extension $\F_p[\zeta_n]/\F_p$ only has degree $\min\{k: p^k \equiv 1 \mod n\}=\ord_n(p)$. This observation can account for an improvement in run-time merely in a few cases, though.\\
%If $n=q^k$ for a prime number $q>2$, $n=2$ or $n=4$, one has to take into account, though, that the group $(\Z/n\Z)^*$ is cyclic of order $\varphi(n)$ and thus comprises exactly $\varphi(\varphi(n))$ generators. This implies the probability that $\ord_n(p)=\varphi(n)$ holds amounts to $\frac{\varphi(\varphi(n))}{\varphi(n)}$, which is very high, if no additional restrictions on the values of $p$ and $n$ are imposed: Indeed, we observe that according to \cite[p.~55]{Montgomery}
%\[ 
%\varphi(n) \geq \frac{n}{\log \log n}(e^{-\gamma}+O(1/\log \log n))
% \]
%holds for $n\geq 3$, where $\gamma$ denotes the Euler constant and $e^{-\gamma}\approx 0.57$ holds.\\
%If $n=2^k$ with $k>2$, one obtains $(\Z/n\Z)^*\cong \Z/2\Z \times \Z/2^{k-2}\Z$ and the second group has $2^{k-3}$ generators. Hence, the probability that $\ord_p(n)=2^{k-2}=\frac{n}{4}$ holds is $\frac{2^{k-2}}{2^{k-1}}=\frac{1}{2}$. Thus, exploiting the above observation can account for an improvement in run-time merely for a few cases.\\

However, the memory requirements are much more forbidding than run-time when considering practical applications for counting points on elliptic curves. As follows from the observations in section \ref{sec:erg}, the polynomial $Q$ corresponding to $\tau_{\ell,n}(q)$ contains $\prec(\ell, n)$ coefficients from $\Q[\zeta_n]$. Experimental results suggest the height (the logarithm of the maximal absolute value) of these coefficients is essentially proportional to $v$ (similar results are well-known from \cite{Cohen} for the modular function $j(q^\ell)$) and thus asymptotically proportional to $\ell$ according to \eqref{eq:untere_schranke_v}. Hence, they imply
\[ 
\tilde{O}(\prec(\ell, n)n\ell)=\tilde{O}((v+e_\Delta)n\ell^2)
 \]
bytes of memory are required in order to store all the coefficients of $Q$. Using again $v=O(\ell)$ as well as $n=O(\ell)$ in the worst case we obtain a memory requirement of $\tilde{O}(\ell^4)$ bytes. This means in the worst case for $\ell \approx 100$ the expected memory requirement for representing the polynomial $Q$  amounts to about $100$ MB, which is confirmed by our computations. %% l=107, n=53, Datei ist ca. 90MB groß.
The expected memory requirement rises to $1.6$ GB for $\ell \approx 200$ and to about $1$ TB for $\ell \approx 1000$. Taking into account that according to section \ref{sec:js_theorie} the polynomial $Q$ for fixed $\ell$ and $n$ has to be precomputed for $\varphi(n)-1$ different Jacobi sums in order to count points on elliptic curves $E/\F_p$ for arbitrary primes $p$, it is evident that the proposed method rapidly leaves the realms of possibility. We observe that for the records set in \cite{Rekord} prime numbers up to $\ell \approx 4000$ and for those in \cite{Sutherland} even primes up to $\ell \approx 11000$ were used.

\footnotesize{
\bibliography{lit}}

\begin{thebibliography}{10}

\bibitem{Abramovich}
Dan Abramovich.
\newblock A linear lower bound on the gonality of modular curves.
\newblock {\em Internat. Math. Res. Notices}, 20:1005--1011, 1996.

\bibitem{ELGS}
Christian Berghoff.
\newblock Universal elliptic {G}au{\ss} sums and applications.
\newblock {\em {arXiv.org}}, 2017.
\newblock URL: \url{https://arxiv.org/pdf/1707.08075.pdf}.

\bibitem{Diss}
Christian Berghoff.
\newblock {\em {Universelle elliptische Gau{\ss}-Summen und der Algorithmus von
  Schoof}}.
\newblock PhD thesis, Universit{\"a}t Bonn, 2017.
\newblock URL: \url{http://hss.ulb.uni-bonn.de/2017/4725/4725.htm}.

\bibitem{BoMoSaSc}
Alin Bostan, Fran{\c{c}}ois Morain, Bruno Salvy, and {{\'E}}ric Schost.
\newblock Fast algorithms for computing isogenies between elliptic curves.
\newblock {\em Math. Comp.}, 77(263):1755--1778, 2008.

\bibitem{BrLaSu}
Reinier Br{{\"o}}ker, Kristin Lauter, and Andrew~V. Sutherland.
\newblock Modular polynomials via isogeny volcanoes.
\newblock {\em Math. Comp.}, 81(278):1201--1231, 2012.

\bibitem{Cohen}
Paula Cohen.
\newblock On the coefficients of the transformation polynomials for the
  elliptic modular function.
\newblock {\em Math. Proc. Cambridge Philos. Soc.}, 95(3):389--402, 1984.

\bibitem{CoWi}
Don Coppersmith and Shmuel Winograd.
\newblock Matrix multiplication via arithmetic progressions.
\newblock {\em J. Symbolic Comput.}, 9(3):251--280, 1990.

\bibitem{deBruijn}
Nicolaas~G. {de Bruijn}.
\newblock On the number of positive integers {$\leq x$} and free prime factors
  {$>y$}. {II}.
\newblock {\em Nederl. Akad. Wetensch. Proc. Ser. A 69=Indag. Math.},
  28:239--247, 1966.

\bibitem{Enge}
Andreas Enge.
\newblock Computing modular polynomials in quasi-linear time.
\newblock {\em Math. Comp.}, 78(267):1809--1824, 2009.

\bibitem{Rekord}
Andreas Enge and Fran{\c{c}}ois Morain.
\newblock Sea in genus 1: 2500 decimal digits.
\newblock Posting to the Number Theory List, 2006.

\bibitem{CIDE}
Jens Franke, Thorsten Kleinjung, Andreas Decker, and Anna Grosswendt.
\newblock Format of the certificate (version 0.1).
\newblock 2012.
\newblock URL:
  \url{http://www.math.uni-bonn.de/people/franke/ptest/fmt-0.1.pdf}.

\bibitem{GaMo}
Pierrick Gaudry and Fran{\c{c}}ois Morain.
\newblock Fast algorithms for computing the eigenvalue in the
  {S}choof-{E}lkies-{A}tkin algorithm.
\newblock In {\em I{SSAC} 2006}, pages 109--115. ACM, New York, 2006.

\bibitem{GMP}
Torbj{\"o}rn et~al. Granlund.
\newblock {GNU} multiple precision arithmetic library 6.1.0, November 2015.
\newblock URL: \url{https://gmplib.org/}.

\bibitem{MaMu}
Markus Maurer and Volker M{{\"u}}ller.
\newblock Finding the eigenvalue in {E}lkies' algorithm.
\newblock {\em Experiment. Math.}, 10(2):275--285, 2001.

\bibitem{Mi_CIDE}
Preda Mih\u{a}ilescu.
\newblock Dual elliptic primes and applications to cyclotomic primality
  proving.
\newblock {\em Mathematica Gottingensis}, 2006.

\bibitem{Mi1}
Preda Mih\u{a}ilescu.
\newblock Elliptic curve {G}auss sums and counting points.
\newblock {\em Mathematica Gottingensis}, 2006.

\bibitem{MiMoSc}
Preda Mih\u{a}ilescu, Fran{\c c}ois Morain, and {\'E}ric Schost.
\newblock Computing the eigenvalue in the {S}choof-{E}lkies-{A}tkin algorithm
  using abelian lifts.
\newblock In {\em I{SSAC} 2007}, pages 285--292. ACM, New York, 2007.

\bibitem{MiVu}
Preda Mih\u{a}ilescu and Victor Vuletescu.
\newblock Elliptic {G}auss sums and applications to point counting.
\newblock {\em Journal of Symbolic Computation}, 45:825--836, 2010.

\bibitem{Morain}
Fran{\c{c}}ois Morain.
\newblock Calcul du nombre de points sur une courbe elliptique dans un corps
  fini: aspects algorithmiques.
\newblock {\em J. Th{\'e}or. Nombres Bordeaux}, 7(1):255--282, 1995.
\newblock Les Dix-huiti{{\`e}}mes Journ{{\'e}}es Arithm{{\'e}}tiques (Bordeaux,
  1993).

\bibitem{Mueller}
Volker M{\"u}ller.
\newblock {\em Ein Algorithmus zur Bestimmung der Punktanzahl elliptischer
  Kurven {\"u}ber endlichen K{\"o}rpern der Charakteristik gr{\"o}{\ss}er
  drei}.
\newblock PhD thesis, Universit{\"a}t des Saarlandes, 1995.

\bibitem{Shimura}
Goro Shimura.
\newblock {\em Introduction to the arithmetic theory of automorphic functions}.
\newblock Publications of the Mathematical Society of Japan, No. 11. Iwanami
  Shoten, Publishers, Tokyo; Princeton University Press, Princeton, N.J., 1971.
\newblock Kan{\^o} Memorial Lectures, No. 1.

\bibitem{Silverman}
Joseph~H. Silverman.
\newblock {\em The arithmetic of elliptic curves}, volume 106 of {\em Graduate
  Texts in Mathematics}.
\newblock Springer, Dordrecht, second edition, 2009.

\bibitem{Sutherland}
Andrew~V. Sutherland.
\newblock On the evaluation of modular polynomials.
\newblock In {\em A{NTS} {X}---{P}roceedings of the {T}enth {A}lgorithmic
  {N}umber {T}heory {S}ymposium}, volume~1 of {\em Open Book Ser.}, pages
  531--555. Math. Sci. Publ., Berkeley, CA, 2013.

\bibitem{Tenenbaum}
G{{\'e}}rald Tenenbaum.
\newblock On ultrafriable integers.
\newblock {\em Q. J. Math.}, 66(1):333--351, 2015.

\bibitem{Washington}
Lawrence~C. Washington.
\newblock {\em Elliptic curves: Number theory and Cryptography}.
\newblock Discrete Mathematics and its Applications (Boca Raton). Chapman \&
  Hall/CRC, Boca Raton, FL, second edition, 2008.

\end{thebibliography}

\end{document}